\documentclass[smallextended,final,envcountsect,]{svjour3_JCA}
\smartqed
\usepackage{graphicx}
\usepackage{float}
\usepackage{cite}
\usepackage{amscd} 
\usepackage{verbatim}
\usepackage{amsxtra}
\usepackage{csquotes}
\usepackage{mathtools}
\usepackage{scrextend}
\usepackage{amsmath,etoolbox}
\usepackage{amssymb}
\usepackage{paralist}
\usepackage{graphics} 
\usepackage{epsfig} 
\usepackage{subcaption}
\usepackage{graphicx}  
\usepackage{epstopdf}
\usepackage[colorlinks=true]{hyperref}
\usepackage{centernot}
\usepackage{extarrows}
\usepackage{mathrsfs}
\hypersetup{urlcolor=blue, citecolor=red}
\usepackage{microtype}
\usepackage[mathscr]{euscript}
\usepackage[shortlabels]{enumitem}
\usepackage{lipsum,calc}
\usepackage[linesnumbered,ruled,vlined]{algorithm2e}
\SetKwInput{KwInput}{Input}               
\SetKwInput{KwOutput}{Output}
\SetKwInput{KwWrite}{Write}
\SetKw{KwBy}{by}
\makeatletter
\newenvironment{sqcases}{%
  \matrix@check\sqcases\env@sqcases
}{%
  \endarray\right.%
}
\def\env@sqcases{%
  \let\@ifnextchar\new@ifnextchar
  \left[
  \def\arraystretch{1.2}%
  \array{@{}l@{\quad}l@{}}%
}
\makeatother

\makeatletter
\newcommand{\raisemath}[1]{\mathpalette{\raisem@th{#1}}}
\newcommand{\raisem@th}[3]{\raisebox{#1}{$#2#3$}}
\makeatother

\def\gi{\gamma_i}

\def\f{\longrightarrow}

\def\N{\mathbb{N}}

\def\e{\varepsilon}
\def\l{\lambda}

\def\x{\bar{x}}

\def\y{\bar{y}}
\def\u{\bar{u}}

\def\a{\alpha}
\def\b{\beta}
\def\<{\langle}
\def\>{\rangle}

\def\e{\varepsilon}
\def\R{\mathbb{R}}

\def\inte{\textnormal{int}\,}
\def\clo{\textnormal{cl}\,}

\def\bdry{\textnormal{bdry}\,}

\def\conv{\textnormal{conv}\,}

\def\vol{\textnormal{Vol}}

\def\gk{\gamma_k}

\def\CO{\mathcal{C}}

\def\-{\textnormal{-}}
\def\I{\mathcal{I}}

\def\sp{\hspace{0.015cm}}

\def\U{\mathscr U}

\def\0gk{\scaleto{0,\gk}{3.5pt}}

\binoppenalty=\maxdimen 
\relpenalty=\maxdimen 

\begin{document}

\title{Optimal Control of Sweeping Processes with Nonsmooth Moving Sets: A Numerical Algorithm}
\titlerunning{Optimal Control of Sweeping Processes \dots}

\author{Chadi Nour}

\institute{Chadi Nour \at
             Department of Computer Science and Mathematics \\
             Lebanese American University\\
             Byblos Campus, P.O. Box 36\\
              Byblos, Lebanon\\
              cnour@lau.edu.lb }

\date{Received: date / Accepted: date}

\maketitle

\begin{abstract} This paper extends the numerical method introduced by de Pinho et al. \cite{pinhonum} and later developed in Nour and Zeidan \cite{verachadinum1,verachadinum2} to the nonautonomous case where the nonsmooth sweeping set depends explicitly on time. The moving character of the sweeping set creates substantial additional difficulties, since several geometric constants involved in the analysis may a priori depend on time. To overcome this issue, we establish uniform geometric estimates for the moving constraint sets and their boundaries. These estimates allow us to extend the discrete approximation scheme developed in \cite{pinhonum,verachadinum1,verachadinum2} and prove its convergence toward admissible solutions of the original problem. Several examples illustrating the applicability of the proposed method are also presented.\end{abstract}
\keywords{Controlled sweeping process \and Moving sweeping set \and Optimal control \and Numerical methods  \and  Approximations}
\subclass{34A60 \and 49K21 \and 65K10}

\section{Introduction}  This paper is concerned with the development of a numerical algorithm for the following fixed-time Mayer problem
$$ \begin{array}{l} (P)\colon\; \hbox{Minimize}\;g(x(T))\\ \hspace{0.9cm} \hbox{over}\;(x,u)\;\hbox{such that}\;u\in \mathscr{U},\; x\in AC([0,T];\R^n),\, \hbox{and} \\[2pt]  \hspace{0.9cm} (D)  \begin{cases}\dot{x}(t)\in f(x(t),u(t))-N_{C(t)}(x(t)), \;\;\hbox{a.e.}\;t\in[0,T]\\ x(0)=x_0, \end{cases}
 \end{array}$$
where $T>0$ is fixed, $g\colon\R^n\to\R$, $f\colon\R^n\times\R^m\to\R^n$, and $C(t)$ is the intersection of the zero-sublevel sets of a finite family of functions $\psi_i\colon[0,T]\times\R^n\to\R$, $i=1,\dots,r$, namely,
$$
C(t):=\{x\in\R^n:\psi_i(t,x)\le 0,\ i=1,\dots,r\}.
$$
Here, $N_{C(t)}(\cdot)$ denotes the Clarke normal cone mapping to $C(t)$, $x_0\in C(0)$ is fixed, $AC([0,T];\R^n)$ is the set of absolutely continuous functions from $[0,T]$ into $\R^n$, and, for a given nonempty set $U\subset\R^m$, the set of control functions $\mathscr{U}$ is defined by
$$\mathscr{U}:=\left\{ u\colon[0,T]\f\R^m \; \;  \hbox{is  measurable and}\;\; u(t) \in U \;\,\hbox{a.e.}\; t\in [0,T]\right\}. $$
A pair $(x,u)$ is said to be {\it admissible} for $(P)$ if $x\colon[0,T]\to\R^n$ is absolutely continuous, $u\in\mathscr{U}$, and $(x,u)$ satisfies the {\it controlled sweeping process} $(D)$. An admissible pair $(\x,\u)$ is called an optimal solution of $(P)$ if $ g(\x(T))\le g(x(T)) $ for every admissible pair $(x,u)$. In this case, $\x$ and $\u$ are respectively called an {\it optimal trajectory} and an {\it optimal control} of $(P)$.

Sweeping processes were introduced by Moreau in \cite{moreau1,moreau2,moreau3} to describe dynamical systems with unilateral constraints. Since then, they have been extensively studied from both theoretical and applied viewpoints, with applications in mechanics, engineering, economics, and crowd motion problems; see, for instance, \cite{outrata}. In particular, important progress has been achieved in the study of optimal control problems governed by sweeping processes, especially concerning existence results and necessary optimality conditions; see, e.g., \cite{Bettiol,brokate,ccmn,ccmnbis,cmo0,cmo,cmo2,cmonew,verasamara,chhm2,chhm,cmn0,henrion,palladino,pinho,pinhoEr,pinholast,VCpaper,verachadisvaa,verachadijune,verachadijca,verachadi}.

Numerical methods for optimal control problems governed by sweeping processes remain rather limited in the literature, see \cite{outrata,pinhonum,verachadinum1,verachadinum2}. In \cite{pinhonum}, an exponential penalization approach was introduced for the case where the sweeping set $C$ is autonomous,\footnote{When the sweeping set is autonomous, the sweeping process is also referred to as {\it reflected dynamics}.} convex, and smooth, and where the initial condition lies in the interior of $C$. The method transforms the sweeping dynamics into a family of standard control systems that can be solved numerically over piecewise constant controls. This approach was later extended in \cite{verachadinum1} to autonomous nonconvex $\CO^{1,1}$-smooth sweeping sets and to arbitrary initial points $x_0\in C$. More recently, in \cite{verachadinum2}, the method was generalized to autonomous nonsmooth sweeping sets represented as intersections of zero-sublevel sets of finitely many $\CO^{1,1}$ functions. Note that the exponential penalization technique used in \cite{pinhonum} was first introduced in \cite{pinho} in the derivation of a Pontryagin-type maximum principle for controlled sweeping processes and was subsequently developed further in \cite{verasamara,pinholast,pinho22,VCpaper,verachadisvaa,verachadijune,verachadijca,verachadi}.

The purpose of the present paper is to extend this numerical method to the nonautonomous framework in which the sweeping set depends explicitly on time. Although this extension may appear natural at first sight, it creates substantial additional difficulties. Indeed, several geometric constants arising in the analysis may a priori depend on the time variable, and many estimates established in the autonomous case are no longer directly applicable. To overcome this issue, we establish uniform geometric and analytical estimates for the moving sweeping sets and their boundaries. These estimates allow us to extend the approximation scheme developed in \cite{pinhonum,verachadinum1,verachadinum2} and prove its convergence toward admissible solutions of the original problem. The effectiveness of the proposed numerical algorithm is illustrated through two examples involving both smooth and nonsmooth moving sweeping sets.

The paper is organized as follows. In Section~\ref{Hypo}, we introduce the basic notations, definitions, and the hypotheses. Section~\ref{mainresult} contains the approximation scheme, the associated numerical algorithm, and the statement of the main convergence result. In Section~\ref{examples}, we present two numerical examples illustrating the proposed numerical algorithm. Section~\ref{proofsection} is devoted to the proof of the main theorem. Finally, Section~\ref{conclusionsection} contains some concluding remarks.

\section{Basic Notations and Definitions--Hypotheses}\label{Hypo}

\subsection{Basic Notations and Definitions}

Throughout the paper, $\|\cdot\|$ and $\<\cdot,\cdot\>$ stand for the Euclidean norm and the standard inner product on $\R^n$, respectively. The symbols $B$ and $\bar B$ denote the open and closed unit balls of $\R^n$. More generally, for $x\in\R^n$ and $\rho\geq0$, we write $B_{\rho}(x)$ and $\bar{B}_{\rho}(x)$ for the open and closed balls centered at $x$ with radius $\rho$, respectively. For $a,b\in\R^n$, the closed and open segments joining $a$ and $b$ are denoted by $[a,b]$ and $]a,b[$, respectively. Given a subset $S$ of $\R^n$, the symbols $\inte S$, $\bdry S$, $\clo S$, $\conv S$, and $S^c$ refer respectively to the interior, boundary, closure, convex hull, and complement of $S$. The space of absolutely continuous mappings from $[a,b]$ into $\R^n$ is denoted by $AC([a,b];\R^n)$. A function $f\colon\R^n\to\R$ is said to be of class $\CO^{1,1}$ whenever it is Fr\'echet differentiable and its derivative is locally Lipschitz continuous. Finally, a mapping $\pi\colon A\to B$ is called bi-Lipschitz if it is a Lipschitz bijection from $A$ onto $B$ whose inverse is also Lipschitz.

We now recall several notions from {\it Nonsmooth Analysis} and {\it Geometry}; see, for instance, the monographs \cite{brudnyi,clarkeold,clsw,delfour,mordubook,rockwet,ThibaultBook}. Let $S$ be a nonempty closed subset of $\R^n$ and let $s\in S$. The proximal, Mordukhovich (or limiting), and Clarke normal cones to $S$ at $s$ are denoted by $N^P_S(s)$, $N_S^L(s)$, and $N_S(s)$, respectively. The Clarke tangent cone to $S$ at $s$ is denoted by $T_S(s)$. Given $\rho>0$, the set $S$ is said to be $\rho$-prox-regular if, for every $s\in S$ and every unit vector $\zeta\in N_S^P(s)$, the inequality $\langle \zeta,x-s\rangle \leq \frac{1}{2\rho}\|x-s\|^2$ holds for all $x\in S$. This inequality is usually referred to as the proximal normal inequality. Finally, $S$ is said to be quasiconvex if there exists a constant $c\geq0$ such that any two points $s_1,s_2\in S$ can be connected by a polygonal curve $\gamma$ contained in $S$ and satisfying $l(\gamma)\leq c\,\|s_1-s_2\|,$ where $l(\gamma)$ denotes the length of $\gamma$.

\subsection{Hypotheses}

Let $C_{[0,T]}$ be the set defined by $$C_{[0,T]}:=\bigcup_{t\in[0,T]} C(t).$$
We assume throughout this paper that the data of $(P)$ satisfy the following hypotheses. \begin{enumerate}[label=\textbf{H\arabic*}:, leftmargin=\widthof{[H4]}+\labelsep]
\item $f$ is continuous on $C_{[0,T]}\times U$; and there exists $M>0$ such that for every $u\in U$, the mapping $x\mapsto f (x, u)$ is $M$-Lipschitz on $C_{[0,T]}$; and $\|f(x,u)\| \leq M$ for all $(x,u)\in C_{[0,T]} \times U.$
\item $f(x,U)$ is convex for all $x\in\R^n$, and $U$ is compact.
\item $C(t)\neq\emptyset$ for all $t\in[0,T]$ is given by
\[C(t):=\bigcap_{i=1}^r C_i(t),\;\hbox{where}\;C_i(t):=\{x\in\R^n : \psi_i(t,x)\leq0\},\;\hbox{and}\] 
$(\psi_i)_{1\le i\le r}$ is a family of continuous functions such that $\psi_i(t,\cdot)$ is $\CO^{1,1}$ for every $t\in[0,T]$. Moreover, for every $i=1,\ldots,r$ and every $t\in [0,T]$,  $\operatorname{bdry} C_i(t)$ is connected for $n>1$, and $C_i(t)$ is convex for $n=1$.
Additionally, there exist constants $M_t>0$, $M_\psi>0$ and $R>0$ such that, for every
$i=1,\ldots,r$ and every $t\in[0,T]$, $C_i(t)\subset B(0;R)$, and, for all
$t,s\in[0,T]$ and all $x,y\in B(0;2R)$,
\[|\psi_i(t,x)-\psi_i(s,x)|\leq M_t|t-s|,\;\hbox{and}\]
\begin{equation}
\|\nabla_x\psi_i(t,x)-\nabla_x\psi_i(t,y)\|
\le
2M_\psi\|x-y\|.\label{H3ineq1}
\end{equation} Finally, there exists $\eta>0$ such that
 \begin{equation}\label{H3ineq2} \left\|\sum_{i\in\I^0_c(t)}\lambda_i\nabla_x\psi_i(t,c)\right\|>2\eta, \;\;  \forall c\in \{x\in \R^n: \I^0_x(t)\not=\emptyset\},\end{equation} where $\I^0_x(t):=\{i\in\{1,\dots,r\} : \psi_i(t,x)=0\}$ and $(\l_i)_{i\in\I^0_c(t)}$ is any sequence of nonnegative numbers satisfying $\sum_{i\in\I^0_c(t)}\l_i=1$. 
\item $g$ is $L_g$-Lipschitz on $C(T)$.
\end{enumerate}
\begin{remark}\label{rem1} By (H3), for every $t\in[0,T]$ and every $c\in\bdry C_i(t)$, taking $\lambda_i=1$ in \eqref{H3ineq2} yields
$ \|\nabla_x\psi_i(t,c)\|>2\eta$. Hence, for every $i\in\{1,\dots,r\}$, $t\in[0,T]$, and every $c\in\bdry C_i$, we have $ \|\nabla_x\psi_i(t,c)\|>2\eta$.
\end{remark}
\begin{remark}\label{Mt} Throughout the paper, and without further mention, the constant $M$ appearing in {\rm(H1)} is replaced by $M+\frac{M_t}{\eta}$. For simplicity, and to keep the constants consistent with those used in \cite{verachadinum2}, we continue to denote this enlarged constant by $M$.
\end{remark}

\section{Main Result--Numerical Algorithm}\label{mainresult}
We denote by $\bar{M}_\psi$ a common upper bound of the family $(\|\nabla_x\psi_i(t,x)\|)_{i=1}^{r}$ over $[0,T]\times \overline{B}(0;R)$, chosen such that $\bar{M}_\psi\ge 2\eta$. Moreover, increasing $M_\psi$ if necessary, we assume that $M_\psi\ge \frac{\eta}{R}$. We consider $(\gk)_k$ a sequence of positive real numbers satisfying
$$\gk\geq\frac{2Me}{\eta}>\frac{2M}{\eta}\;\,\hbox{for all } k\in\N,$$
and such that $\gk\to\infty$ as $k\to\infty$. Correspondingly, we define the real sequences $(\a_k)_k$ and $(\sigma_k)_k$ by
\begin{equation}\label{sigmadef}
\a_k:=\frac{\ln \left(\frac{\eta\gk}{2M}\right)}{\gk}
\;\,\hbox{and}\;\,
\sigma_k:=\frac{rM{\psi}}{2\eta^2}\left(\frac{\ln(r)}{\gk}+\a_k\right),\;\,k\in\N.
\end{equation}
By construction, the sequence $(\a_k)_k$ satisfies $ \gk e^{-\a_k\gk}=\frac{2M}{\eta}$, with $\a_k>0$ for all $k\in\N$ and $\a_k\searrow 0$. Likewise, $(\sigma_k)_k$ satisfies $\sigma_k>0$ for all $k\in\N$ and $\sigma_k\searrow 0$. We now introduce the functions $\psi_{\gk}\colon[0,T]\times\R^n\to\R$ defined by  \begin{equation*}\label{psigkdef} \psi_{\gk}(t,x):=\frac{1}{\gk}\ln\Bigg(\sum_{i=1}^{r}e^{\gk \psi_i(t,x)}\Bigg),\;\;\forall (t,x)\in[0,T]\times \R^n. \end{equation*}
For every $k\in\N$, the function $\psi_{\gk}$ is continuous on $[0,T]\times\R^n$, and, for every $t\in[0,T]$, the function $\psi_{\gk}(t,\cdot)\in C^{1,1}(\R^n;\R)$. Moreover, the sequence $(\psi_{\gk})_k$ is monotonically nonincreasing in $k$, and converges uniformly to $\psi$ on $[0,T]\times\R^n$. In addition, for all $k\in\N$, for all $i=1,\dots,r$, and for all $(t,x)\in[0,T]\times\R^n$, we have
\begin{equation*}\label{17} \psi_i(t,x)\le\psi(t,x)\le\psi_{\gk}(t,x)\le\psi(t,x)+\frac{\ln(r)}{\gk}.\end{equation*}
A direct computation yields \begin{equation}\label{gradpsigk}
\nabla_x\psi_{\gk}(t,x)= \frac{\sum\limits_{i=1}^r e^{\gk\psi_i(t,x)} \nabla_x\psi_i(t,x)}{\sum\limits_{i=1}^re^{\gk\psi_i(t,x)}},\;\;\forall (t,x)\in[0,T]\times\R^n.
\end{equation}
In order to define the approximating dynamics, let $c\in \bdry C(0)$ and denotes by $d_c$ the nonzero vector $d_c:=\sum_{j\in\I^0_c}v_j(c) $, where for $j=1,\dots,r$, $v_j(c)$ is the unique projection of $-\nabla_x\psi_j(0,c)$ onto the Clarke tangent cone $T_{C(0)}(c)$. We then define the {\it approximation} dynamic $(D_{\gk})$ of $(D)$ by  
\begin{equation}\label{Dgk1}\begin{cases} \dot{x}(t)=f(x(t),u(t))-\sum\limits_{i=1}^{r} \gk e^{\gk\psi_i(t,x(t))} \nabla_x\psi_i(t,x(t)),\;\textnormal{a.e.}\; t\in[0,T],\\ x(0)=x^k_{0}, \end{cases}
 \end{equation}
where the sequence $\left(x_0^k\right)_k$ associated with the initial point $x_0$ of $(P)$ is defined by
\begin{equation} \label{initial} x_0^k:= 
     \begin{cases}x_0,\;\forall k\in\N, &\;\hbox{if}\;x_0\in\inte {C(0)},\vspace{0.2cm}\\ \displaystyle {x_0}+\sigma_k \frac{d_{x_0}}{\|d_{x_0}\|},\;\forall k\in\N,&\;\hbox{if}\; x_0\in\bdry C(0).\end{cases} \end{equation}
Using \eqref{gradpsigk}, the dynamic $(D_{\gk})$ can equivalently be rewritten in terms of $\psi_{\gk}$ as follows:
 \begin{equation*}\begin{cases} \dot{x}(t)=f(x(t),u(t))-\gk e^{\gk\psi_{\gk}(t,x(t))} \nabla_x\psi_{\gk}(t,x(t)),\;\textnormal{a.e.}\; t\in[0,T],\\ x(0)=x_0^{k}. \end{cases} \end{equation*}
This leads naturally to the approximating optimal control problem $(P_{\gk})$, obtained from $(P)$ by replacing the original dynamics $(D)$ with its approximation $(D_{\gk})$, namely, $$ \begin{array}{l} (P_{\gk})\colon\; \hbox{Minimize}\;g(x(T))\\ \hspace{1.17cm} \hbox{over}\;(x,u)\;\hbox{such that}\;u\in \mathscr{U}, x\in AC([0,T];\R^n), \;\hbox{and} \\[2pt]  \hspace{0.7cm} ({D}_{\gk}) \begin{cases} \dot{x}(t)=f(x(t),u(t))-\gk e^{\gk\psi_{\gk}(t,x(t))} \nabla_x\psi_{\gk}(t,x(t)),\,\,\textnormal{a.e.}\; t\in[0,T],\\ x(0)=x_0^{k}.  \end{cases}
    \end{array}$$
For $N\in\N$, let $h:=\frac{T}{N}$. We further denote by $\U^N$ the set of piecewise constant control functions $u^N$ on $[0,T]$ with step size $h$, namely, $$ u^N(t)\equiv u_j^N\;\,\hbox{for } t\in[(j-1)h,jh),\;\,j=1,\dots,N, $$
where $\big(u_1^N,\dots,u_N^N\big)\in U^N:=\overbrace{U\times\cdots\times U}^{N}$. In this framework, $x_{\gk}^N$ denotes the solution of $(D_{\gk})$ associated with the control $u^N\in\U^N$. We then introduce the problem $\big(P_{\gk}^N\big)$ obtained from $(P_{\gk})$ by restricting the controls to the class $\U^N$, that is, 
$$ \begin{array}{l} \big(P_{\gk}^N\big)\colon\; \hbox{Minimize}\;g(x(T))\\[1pt] \hspace{1.22cm} \hbox{over}\;(x,u^N)\in AC([0,T];\R^n)\times \mathscr{U}^N\;\hbox{such that for} \;j=1,\dots,N,\\[1pt] \hspace{1.22cm}
 x(\cdot):=x^j(\cdot)\;\hbox{on}\,[(j-1)h, jh]\;\hbox{and}\; x^j \;\hbox{satisfies on}\;  [(j-1)h, jh)\\[3pt]  \hspace{1.2cm} \begin{cases} \dot{x}^j(t)=f(x^j(t),u^N_j)-\gk e^{\gk\psi_{\gk}(t,x^j(t))} \nabla_x\psi_{\gk}(t,x^j(t)), \\[7pt]  x^j((j-1)h)=\begin{sqcases} x_0^{k} & \hbox{if}\;j=1,\\[2pt] x^{j-1}((j-1)h) & \hbox{if}\;j\geq 2. \end{sqcases} \end{cases}
    \end{array}$$
    Since $U^N$ is compact, the problem $\big(P_{\gk}^N\big)$ admits an optimal solution. Let $\left(\hat{x}_{\gk}^N,\hat{u}_{\gk}^N\right)$ be an optimal solution of $\big(P_{\gk}^N\big)$, and denote by $\tilde{x}^N_{\gk}$ the solution of the original sweeping process $(D)$ corresponding to the control $\hat{u}_{\gk}^N$.  
    
Based on the above notation, we now state the following theorem, which constitutes the main result of this paper. Roughly speaking, it shows that $\big(P_{\gk}^N\big)$ approximates $(P)$ as $N\to\infty$ and $k\to\infty$. This extends \cite[Theorem 3.1]{verachadinum2} to the more general setting where the sweeping set depends explicitly on time. Although the statement below formally coincides with the autonomous case treated in \cite{verachadinum2}, its proof (see Section \ref{proofsection}) is substantially more involved due to the explicit time dependence of the sweeping set.

\begin{theorem} \label{th1}  Let $(\x,\u)$ be an optimal solution for $(P)$. For any $\varepsilon >0$, there exist  $k_{\e}\in\N$, and  $N_{\e}\in \N$ such that,  for $k\ge k_{\e}$ and $N\ge N_{\e}$,
  \begin{eqnarray*} &&g(\x(T))\leq g(\tilde{x}^N_{\gk}(T))\leq g(\x(T))+\e,\;\;\hbox{and} \\ && g(\x(T))-\frac{\e}{3}\leq g(\hat{x}^N_{\gk}(T))\leq g(\x(T))+\frac{2\e}{3}.\end{eqnarray*}
Moreover, there exists an optimal solution $(\hat{x},\hat{u})$ of $(P)$ such that, up to a subsequence, both sequences $\hat{x}^N_{\gk}$ and $\tilde{x}^N_{\gk}$ converge uniformly to $\hat{x}$ as $N\f\infty$ and $k\f\infty$.
\end{theorem}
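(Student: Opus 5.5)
The proof will follow the architecture of \cite[Theorem 3.1]{verachadinum2}. The new ingredient is a set of uniform-in-time geometric estimates for $C(t)$ and for the approximating sets $C^{\gk}(t):=\{x:\psi_{\gk}(t,x)\le 0\}$. The first step is to establish these estimates, with all constants independent of $t\in[0,T]$; each comes from (H3) by a compactness and continuity argument on $[0,T]\times\bar B(0;2R)$:
\begin{itemize}
\item a uniform neighbourhood $\bdry C(t)+\rho B$ on which the positive-linear-independence bound \eqref{H3ineq2} still holds with $\eta$ in place of $2\eta$;
\item uniform prox-regularity of $C(t)$ and of $C^{\gk}(t)$, with constant of order $\eta/M_\psi$;
\item uniform quasiconvexity of these sets;
\item uniform bi-Lipschitz control between the sets.
\end{itemize}

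The second step establishes invariance and a priori bounds for the approximating dynamics $(D_{\gk})$ with an arbitrary measurable control $u$. The goal is to show that the solution $x_{\gk}$ stays in $C^{\gk}(t)$ and is in fact interior to it: $\psi_{\gk}(t,x_{\gk}(t))\le-\a_k$ for $t$ past a short initial layer, or for all $t$ when $x_0^k$ is chosen by \eqref{initial}. The tool is the derivative
\[
\tfrac{d}{dt}\psi_{\gk}(t,x_{\gk}(t)).
\]
The time-derivative contribution is bounded by $M_t$; this is why Remark~\ref{Mt} enlarges $M$ to $M+\frac{M_t}{\eta}$. The drift contributes at most $M\bar M_\psi$. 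The penalty term contributes $-\gk e^{\gk\psi_{\gk}}\|\nabla_x\psi_{\gk}\|^2\le-\gk e^{\gk\psi_{\gk}}\eta^2$ near the boundary. These pieces combine with the identity $\gk e^{-\a_k\gk}=\frac{2M}{\eta}$. From this I expect the uniform bounds
\[
\gk e^{\gk\psi_{\gk}(t,x_{\gk}(t))}\le \frac{2M}{\eta}\ \text{(suitably)},\qquad \|\dot x_{\gk}\|\le \text{const}.
\]
These bounds give equicontinuity. Arzel\`a--Ascoli then yields a uniformly convergent subsequence, and weak$^*$ compactness of $\xi_k:=\gk e^{\gk\psi_{\gk}}$ together with the convexity in (H2) identifies every limit as a solution of $(D)$ for some $u\in\mathscr U$. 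The identification uses closure of the graph of $N_{C(t)}$ and that $\sum_i\lambda_i\nabla_x\psi_i(t,x)$ with $\lambda_i$ supported on active indices lies in $N_{C(t)}(x)$.

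The third step is a quantitative stability estimate with explicit rates. Fix a control $u$, and let $x$ solve $(D)$ and $x_{\gk}$ solve $(D_{\gk})$ with that $u$. I will show
\[
\|x_{\gk}-x\|_\infty\le \omega(k),\qquad \omega(k)\to 0,
\]
by a Gronwall argument on $\|x_{\gk}-x\|^2$ using hypomonotonicity of the prox-regular normal cones; the uniform constants from the first step enter here. Next, the standard Lipschitz/Gronwall estimate for the ODE $(D_{\gk})$ (Lipschitz constant depending on $k$) shows that piecewise-constant controls $u^N$ approximating a given $u$ in $L^1$ give $x^N_{\gk}\to x_{\gk}$ as $N\to\infty$ for fixed $k$. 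For $(D)$ itself, a similar Lipschitz dependence on the control in $L^1$, from the sweeping-process stability estimate, gives $\tilde x\to x$.

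Finally, the inequalities are assembled by the usual chain, each piece costing $\varepsilon/3$. The bound $g(\x(T))\le g(\tilde x^N_{\gk}(T))$ holds by optimality, since $\tilde x^N_{\gk}$ is admissible. For the upper bound, approximate $\u$ by $u^N$ and write
\[
g(\hat x^N_{\gk}(T))\le g(x^N_{\gk}(T))\le g(\x(T))+\tfrac{2\varepsilon}{3},
\]
using the $L_g$-Lipschitz property of $g$ from (H4) and the third step. Then $\|\hat x^N_{\gk}-\tilde x^N_{\gk}\|\le\omega(k)$ by the third step applied with $\hat u^N_{\gk}$, which gives the remaining bounds. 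The convergence of a subsequence to an optimal $\hat x$ follows from the compactness argument of the second step together with these inequalities. The main obstacle is the third step, specifically obtaining the rate $\omega(k)$ with constants that do not degenerate in $t$ when the active index sets change. If a direct Gronwall argument fails, I would first try to compare with the projection of $x_{\gk}$ onto $C(t)$.
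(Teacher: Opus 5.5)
Your overall architecture matches the paper: invariance in $C^{\gamma_k}(t,k)$ with $\xi_{\gamma_k}\le 2M/\eta$, a comparison of $(D_{\gamma_k})$ with $(D)$ at rate $\omega(k)$ via prox-regularity, then the $\varepsilon/3$ chain. The discretization step, however, has a genuine gap. You compare $x^N_{\gamma_k}$ with $x_{\gamma_k}$ through the Lipschitz constant of the right-hand side of $(D_{\gamma_k})$, which you acknowledge depends on $k$. That constant is of order $\gamma_k$. The Jacobian of the penalty field $\nabla_x e^{\gamma_k\psi_{\gamma_k}(t,\cdot)}$ contains $\gamma_k^2e^{\gamma_k\psi_i}\nabla_x\psi_i\otimes\nabla_x\psi_i$, and even with the bound $\gamma_k e^{\gamma_k\psi_i}\le 2M/\eta$ a factor $\gamma_k$ remains. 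Gronwall then costs $e^{c\gamma_k T}$, so the $N$ you need blows up with $k$. The theorem asks for one $N_\varepsilon$ valid for \emph{every} $k\ge k_\varepsilon$, and for convergence as $N,k\to\infty$ jointly. Your argument only gives $N_\varepsilon(k)$, so the bound $g(x^N_{\gamma_k}(T))\le g(\bar x(T))+\frac{2\varepsilon}{3}$ is not obtained uniformly in $k$. The step you flag as the main obstacle (the rate $\omega(k)$) is actually the routine one, handled by prox-regularity in Proposition~\ref{propo2}; the discretization step is where the real difficulty lies.

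The missing ingredient is Lemma~\ref{lem2}: on the invariant set, the penalty field is hypomonotone with a constant independent of $k$ and $t$,
\[
\langle\nabla_x e^{\gamma_k\psi_{\gamma_k}(t,y)}-\nabla_x e^{\gamma_k\psi_{\gamma_k}(t,x)},y-x\rangle\ge -\frac{4r\beta MM_\psi}{\eta}\|y-x\|^2,\quad x,y\in C^{\gamma_k}(t,k).
\]
Along a segment, the $\gamma_k^2$ term is positive semidefinite and only helps. The remaining Hessian part is bounded by $\frac{2M}{\eta}\cdot 2M_\psi$, but only where $\gamma_k e^{\gamma_k\psi_i}\le\frac{2M}{\eta}$, i.e.\ on $C_i(t,k)$. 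Since $C_i(t,k)$ is not convex, when $[x,y]$ leaves it one replaces the segment by a curve $\Gamma$ in $\bdry C_i(t,k)$. On that curve, $\langle\nabla_x\psi_i,\dot\Gamma\rangle=0$ annihilates the $\gamma_k^2$ term. Controlling $\ell(\Gamma)\le\beta\|y-x\|$ then requires quasiconvexity of $\bdry C_i(t,k)$ with $\beta$ uniform in $t$ and $k$. This is exactly what Lemma~\ref{lem0} and the uniformly bi-Lipschitz projections of Proposition~\ref{prop02}$(iv)$--$(v)$ provide.

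With this one-sided estimate, Gronwall gives Proposition~\ref{propo3}: for each $u$ there is $u^N\in\mathscr{U}^N$ with $\|x_{\gamma_k}(jh)-x^N_{\gamma_k}(jh)\|^2\le Ch$. Here $C$ depends on $\bar M=M+\frac{4r\beta MM_\psi}{\eta}$ but not on $k$, and this is what makes $N_\varepsilon$ independent of $k$. Your first step lists uniform quasiconvexity but never uses it; this is where it is indispensable. In the convex case of Remark~\ref{convex}, the lemma is immediate.
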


As a direct consequence of Theorem \ref{th1}, we obtain Algorithm \ref{alg1}, which provides a numerical procedure for approximating both optimal trajectories and optimal values of the controlled sweeping process problem $(P)$.
\begin{algorithm}[htb]
\SetAlgoLined
\KwInput{Positive integer $N$, numbers $\e>0$, $\gamma>0$ and $\delta>0$}\vspace{0.04cm}
\KwOutput{$\blacktriangleright$ Approximating minimum value of $(P)$\newline $\blacktriangleright$ Numerical optimal trajectory of $(P)$} \vspace{0.06cm}
 Initialization\;\vspace{0.04cm}
  $k\gets 1$;\vspace{0.04cm}\\
 \For{$i\gets 0$ \KwTo $1$ \KwBy $1$}{\vspace{0.04cm}
 $\gamma_{i}\gets\gamma+i\sp\delta$;\vspace{0.04cm}\\
 compute $x_0^i$;\vspace{0.04cm}\\
 compute a solution $\hat{x}^N_{\gi}$ of $\big(P_{\gi}^N\big)$ for $\gamma_i$ and $x_0^i$;\vspace{0.04cm}\\
$g_i\gets g\big(\hat{x}^N_{\gi}(T)\big)$;\vspace{0.04cm}\\
}\vspace{0.03cm}
\While{$|g_{k}-g_{k-1}|> \e$}{
  \vspace{0.08cm} $k\gets k+1$;\vspace{0.04cm}\\
  $\gamma_k\gets \gamma_{k-1}+\delta$;\vspace{0.04cm}\\
compute $x_0^k$;\vspace{0.04cm}\\
compute a solution $\hat{x}^N_{\gk}$ of $\big(P_{\gk}^N\big)$ for $\gamma_k$ and $x_0^k$;\vspace{0.04cm}\\
$g_k\gets g\big(\hat{x}^N_{\gk}(T)\big)$;\vspace{0.04cm}\\
 }\vspace{0.03cm}
\Return $g_k$;
\caption{Numerical Method for solving $(P)$}
\label{alg1}
\end{algorithm}

\section{Examples and Numerical Simulations} \label{examples}

In this section, we illustrate the applicability of the proposed numerical algorithm on two optimal control problems governed by nonautonomous sweeping processes. The examples are chosen in order to highlight different qualitative behaviors of the optimal trajectories, including interior motion, contact with the moving boundary, constrained motion along the boundary of the sweeping set, as well as situations where the initial condition lies either in the interior or on the boundary of the sweeping set.

For each example, we compare the exact optimal solution, when available, with the numerical trajectories generated by the approximation method developed in the present paper. The numerical computations are performed using a Nelder-Mead optimization procedure coupled with a fourth-order Runge-Kutta discretization of the penalized dynamics.

\begin{example}  \label{Example1} We consider the following data for the problem $(P)$:
\begin{enumerate}[$\bullet$]
\item $n=1$ and $m=1$, and the perturbation mapping $f\colon\R\times\R\f\R$ is defined by $f(x,u):=u$.
\item $r=1$, the terminal time is $T:=3$, and the function $\psi_1\colon[0,3]\times \R\f\R$, denoted now by $\psi$, is defined by $$\psi(t,x)=(x+1)\left(x-\frac34-\frac{t^2}{4}\right).$$
Hence, for every $t\in[0,3]$, the sweeping set $C(t)=C_1(t)=\left[-1,\frac34+\frac{t^2}{4}\right]$.
\item The objective function $g\colon\R\f\R$  is defined  by $g(x):=-x$.
\item The control set is $U:=[-1,1]$ and the initial point is $x_0:=0\in\inte C(0)$.
\end{enumerate}
One can easily verify that our hypotheses are satisfied with $M=28$, $R=4$, $M_{\psi}=1$, $M_t=\frac{27}{2}$, $\eta=\frac{1}{2}$, and $L_g=1$. Observe that for every $(t,x)\in[0,3]\times\R$, the normal cone to $C(t)$ is given by $$N_{C(t)}(x)=\begin{cases} \{0\}, & \text{if } x\in\left]-1,\frac34+\frac{t^2}{4}\right[,\\ (-\infty,0], & \text{if } x=-1,\\ [0,+\infty), & \text{if } x=\frac34+\frac{t^2}{4}. \end{cases}$$ Consequently, the sweeping dynamics associated with $(P)$ takes the form $$\dot x(t)\in u(t)-N_{C(t)}(x(t)),\;\;u(t)\in[-1,1],\;\;\text{for a.e. } t\in[0,3].$$
Since $g(x)=-x$, minimizing $g(x(3))$ is equivalent to maximizing the terminal value $x(3)$. Therefore, the optimal strategy is to push the trajectory as far as possible to the right, namely by taking
\[ \bar u(t)=1,\;\;\forall t\in[0,3]. \]
Starting from $x_0=0$, the free motion is given by $x(t)=t$. This trajectory reaches the upper boundary $b(t):=\frac34+\frac{t^2}{4}$ at the first time $\tau$ satisfying $\tau=b(\tau)$. Since $\tau=\frac34+\frac{\tau^2}{4}$ is equivalent to $\tau^2-4\tau+3=0$, we obtain $\tau=1$. Hence, for $0\le t\le1$, the optimal trajectory is $\bar x(t)=t$. After reaching the upper boundary at time $t=1$, the trajectory may either remain on the boundary or return to the interior of the moving set. In order to remain on the upper boundary, the trajectory must satisfy \[ \bar x(t)=b(t)=\frac34+\frac{t^2}{4},\;\,\hbox{and hence}\;\,\dot{\bar x}(t)=b'(t)=\frac t2. \]
Since $\bar u(t)=1$, the dynamics on the boundary can be written as
\[\dot{\bar x}(t)=1-\xi(t)\;\,\hbox{with}\;\,\xi(t)\ge0,\;\,\hbox{thus}\;\,\xi(t)=1-\frac t2.\] This is nonnegative precisely for $t\le2$. If the trajectory were to leave the boundary immediately after $t=1$, then it would move with free velocity $1$, which would force it to exit the moving set since $b'(t)=\frac t2<1$ for $1<t<2$. Therefore, the trajectory remains on the upper boundary for $1\le t\le2$. At $t=2$, we have $b'(2)=1$, and for $t>2$, the upper boundary moves faster than the free velocity allowed by the control $u=1$. Therefore, the trajectory leaves the boundary and moves again in the interior with velocity $1$. Since $\bar x(2)=b(2)=\frac34+\frac{4}{4}=\frac74$, we get, for $2\le t\le3$, that $\bar x(t)=\frac74+(t-2)$. Consequently, the optimal trajectory is \[ \bar x(t)=\begin{cases} t, & 0\le t\le1,\\[2mm] \dfrac34+\dfrac{t^2}{4}, & 1\le t\le2,\\[2mm] \dfrac74+(t-2), & 2\le t\le3. \end{cases}\] In particular, $\bar x(3)=\frac74+1=\frac{11}{4}$. Therefore, \[\min(P)=g(\bar x(3))=-\frac{11}{4}=-2.75.\]

We next illustrate the performance of Algorithm \ref{alg1} on Example \ref{Example1}. More precisely, we numerically compute approximations of the minimum value and of an optimal trajectory associated with the discrete penalized problem $\big(P_{\gk}^N\big)$. These numerical approximations are then compared with the exact minimum value and the exact optimal trajectory of $(P)$ derived above. The resulting simulations show an excellent agreement between the theoretical and numerical solutions, thereby providing a strong numerical illustration of the convergence result established in Theorem \ref{th1}. 
\begin{figure}[t!]
\centering
\includegraphics[width=70mm]{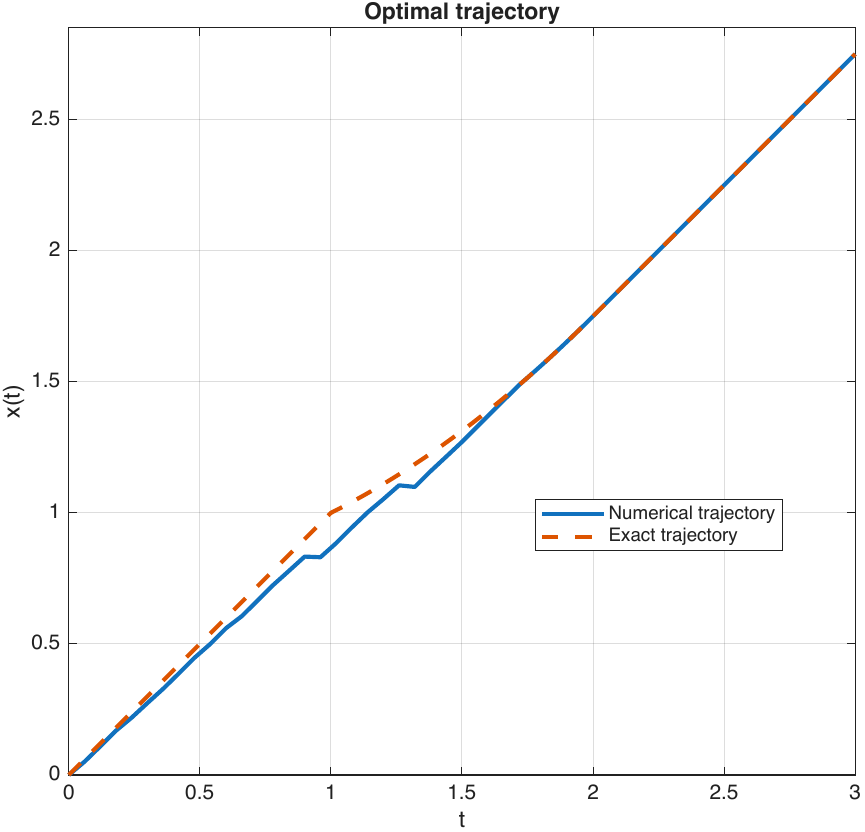}
\caption{\label{Fig1} Numerical vs exact optimal trajectory}
\end{figure}
We choose $N=50$, $\e=10^{-5}$, $\gamma=310$,\footnote{Since $\frac{2Me}{\eta}\approx 304.447$.} and $\delta=10$, and apply Algorithm \ref{alg1} in order to numerically compute an approximation of both the minimum value and an optimal trajectory associated with $(P)$. Since the initial point lies in the interior of $C(0)$, the sequence $(x_0^k)_k$ reduces here to the constant sequence $x_0$.  Starting from $\gamma_1=310$, the algorithm reached the prescribed tolerance $\e$ after $187$ iterations, corresponding to the value $\gamma_{187}=2170$. The obtained numerical cost is $$ g_{187}=g(\hat{x}_{\gamma_{187}}^N(3))=-2.747601272285,$$
which is in excellent agreement with the exact minimum value of $(P)$ computed above. The total running time of the algorithm was $50.394$ seconds.\footnote{Machine: MacBook Pro, Apple M3 chip, 16GB Unified Memory.} Moreover, Fig.~\ref{Fig1} shows that the computed numerical optimal trajectory $\hat{x}_{\gamma_k}^N$ accurately reproduces the qualitative behavior of the exact optimal trajectory $\bar x$, including the interior motion, the contact with the moving boundary, the constrained motion along the boundary, and the return to the interior of the sweeping set, while Fig.~\ref{Fig2} illustrates the very rapid convergence of the numerical minimum values toward the exact minimum value $-\frac{11}{4}$ as the penalization parameter $\gamma_k$ increases. Altogether, these numerical results provide a strong illustration of the convergence result established in Theorem \ref{th1}.
\begin{figure}[h!]
\centering
\includegraphics[width=70mm]{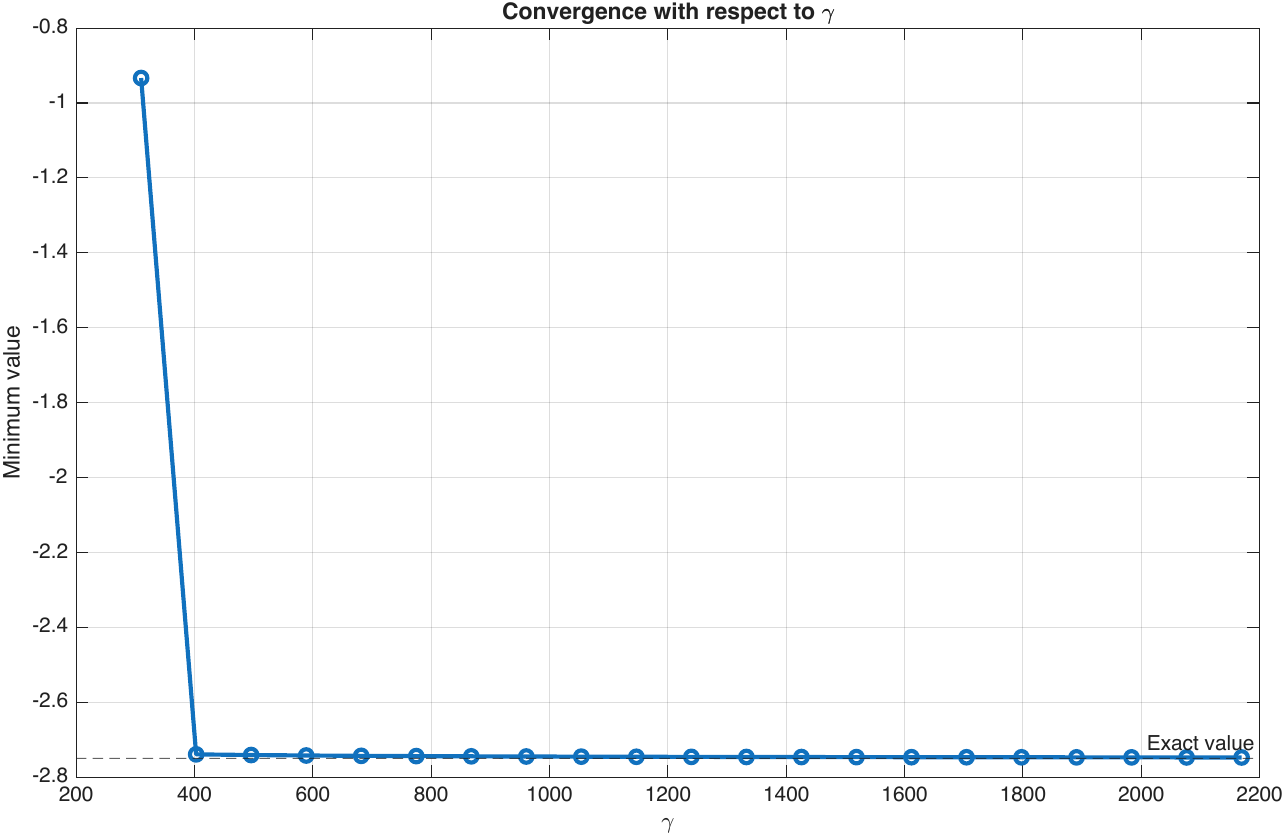}
\caption{\label{Fig2} Convergence with respect to $\gk$}
\end{figure}
\end{example}

\begin{example}\label{Example2}
We consider the following data for the problem $(P)$:
\begin{enumerate}[$\bullet$]

\item $n=3$ and $m=1$, and the perturbation mapping $
f\colon\R^3\times\R\to\R^3$
is defined by $f(x,u):=(0,u,0)$.

\item $r=2$, the terminal time is $T=t_1+\frac{3\sqrt2-1}{2}$ with $t_1=3\ln(1+\sqrt2)$, and the moving sweeping set is given by $C(t):=C_1(t)\cap C_2(t),$ where for $\rho(t):=\sqrt{16+q(t)^2}$ and $q(t)=
\begin{cases}
3, & 0\le t\le t_1,\\
3+(t-t_1), & t_1\le t\le T,
\end{cases}$
\begin{enumerate}[$\bullet$]
\item $C_1(t):=
\left\{
(x_1,x_2,x_3)\in\R^3:
(x_1-4)^2+x_2^2+x_3^2\le \rho(t)^2
\right\}$, and
\item $C_2(t):=
\left\{
(x_1,x_2,x_3)\in\R^3:
(x_1+4)^2+x_2^2+x_3^2\le \rho(t)^2
\right\}$.
\end{enumerate}
Hence, $\psi_1, \psi_2\colon[0,T]\times\R^3\f\R$ are defined by: 
\begin{enumerate}[$\bullet$]
\item $\psi_1(t,x_1,x_2,x_3):=(x_1-4)^2+x_2^2+x_3^2-\rho(t)^2$.
\item $\psi_2(t,x_1,x_2,x_3):=(x_1+4)^2+x_2^2+x_3^2- \rho(t)^2$.
\end{enumerate}
Note that $C(t)$ is a nonsmooth and convex set.
\item The objective function $g\colon\R^3\f\R$  is defined  by $$g(x_1,x_2,x_3)=x_1^2+\left(x_2-\frac12\right)^2+
\left(x_3-\frac{3\sqrt2}{2}\right)^2.$$
\item The control set is $U:=[-1,1]$ and the initial point is $x_0:=(0,0,3)$. Observe that the nonsmooth part of the boundary of $C(t)$ is given by
\[
\Gamma(t):=
\Big\{
(x_1,x_2,x_3)\in\R^3:
x_1=0,\;
x_2^2+x_3^2=q(t)^2
\Big\}.
\]
Hence, $x_0=(0,0,3)\in\Gamma(0),$ that is, the initial point lies on the nonsmooth part of the boundary of $C(0)$,
see Fig. \ref{Fig3} for $C(0)$.
\end{enumerate}
\begin{figure}[h!]
\centering
\includegraphics[width=67mm]{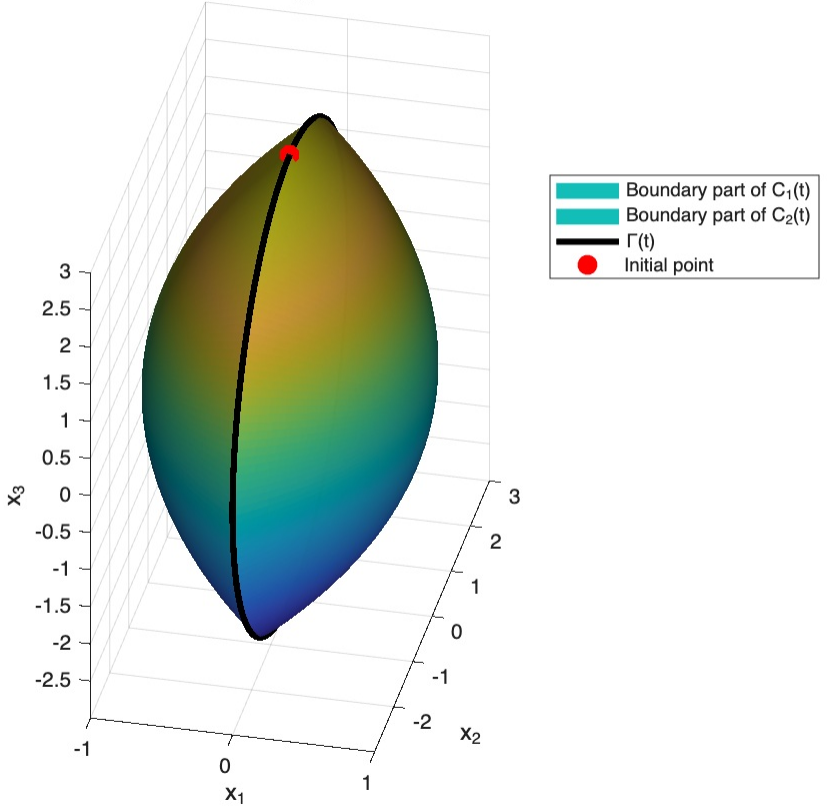}
\caption{\label{Fig3} $C(0)$, $\Gamma(0)$, and $x_0$}
\end{figure}
One can easily verify that our hypotheses are satisfied with $M=\frac{7+3\sqrt{2}}{2}$, $R=11$, $M_\psi=1$, $M_t=5+3\sqrt{2}$, $\eta=2$, and $L_g=17$. Moreover, a simple calculation leads to $d_{x_0}=(0,0,-1)$, and hence from \eqref{sigmadef} and \eqref{initial}, $$\,x_0^k=(0,0,3)+
\frac{1}{4\gamma_k}
\ln\!\left(\frac{4\gamma_k}{7+3\sqrt2}\right)(0,0,-1),\;\,\forall k.$$
By arguments similar to those developed in Example \ref{Example1}, one verifies that the optimal control is given by
\[
\bar u(t)=
\begin{cases}
1, & 0\le t\le t_1,\\
-1, & t_1\le t\le T,
\end{cases}
\]
and that the corresponding optimal trajectory is
\[
\bar x(t)=
\begin{cases}
\bigl(0,3\sin\theta(t),3\cos\theta(t)\bigr),
& 0\le t\le t_1,\\[1mm]
\left(
0,\dfrac{3\sqrt2}{2}-(t-t_1),\dfrac{3\sqrt2}{2}
\right),
& t_1\le t\le T,
\end{cases}
\]
where $
\theta(t)=2\arctan(e^{t/3})-\frac{\pi}{2}$. Thus, the trajectory first evolves along the nonsmooth part $\Gamma(t)$ of the boundary of the moving set, and then enters the interior of $C(t)$ and evolves freely until the terminal time. In particular, $\bar x(T)=\left(0,\frac12,\frac{3\sqrt2}{2}\right),$ and therefore
\[
\min(P)=0.
\]
\begin{figure}[t!]
\centering
\includegraphics[width=95mm]{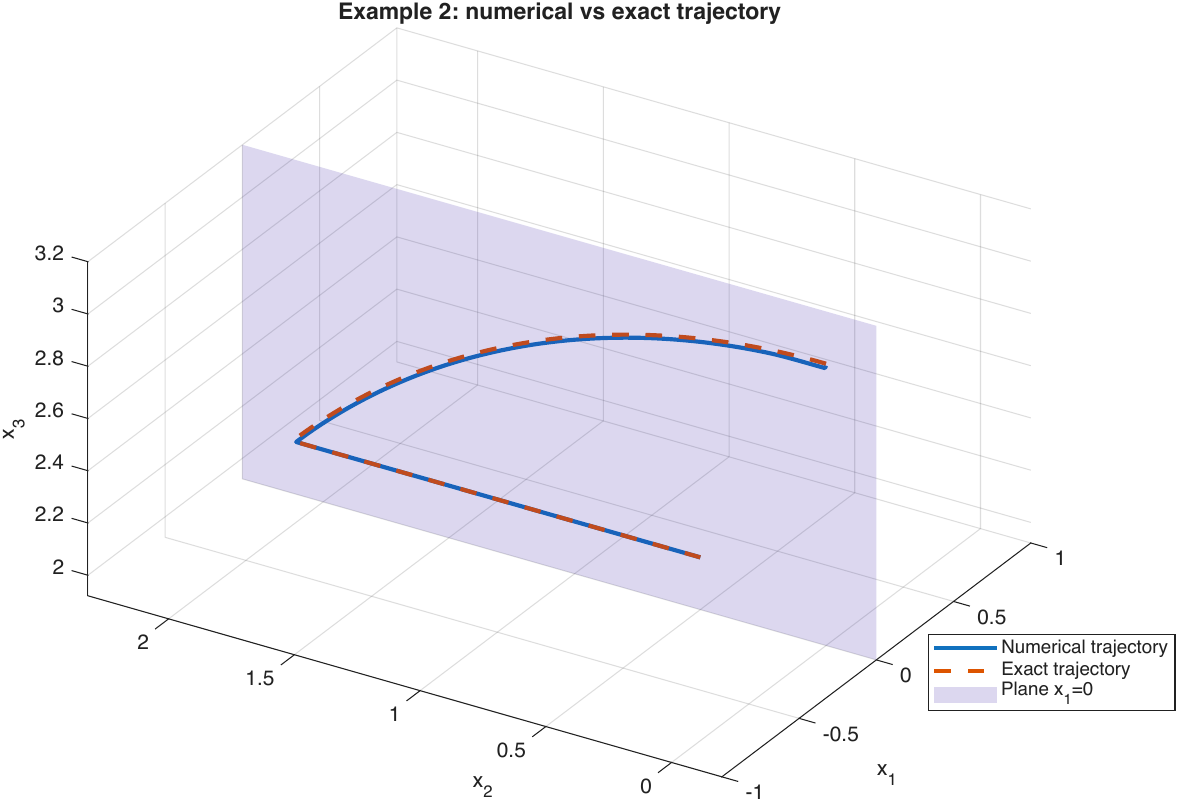}
\includegraphics[width=80mm]{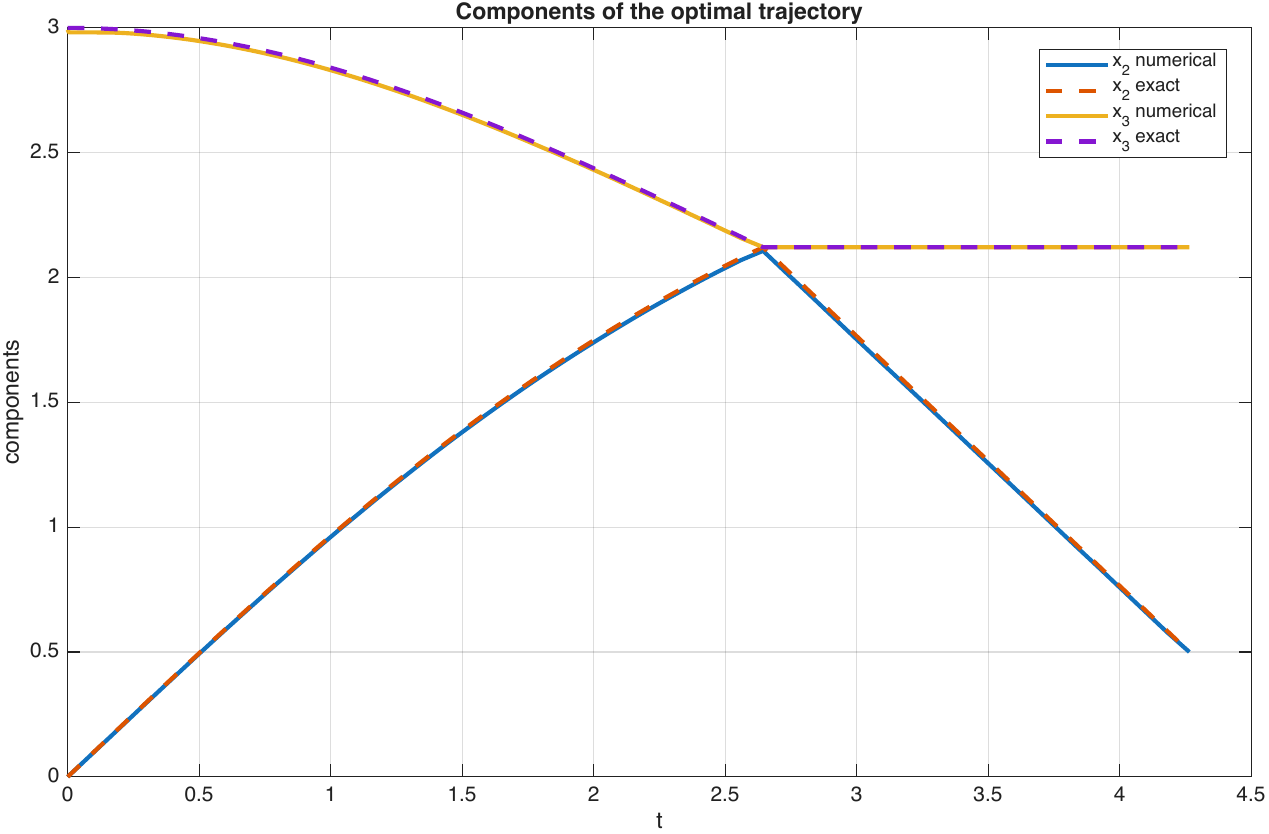}
\caption{\label{Fig4} Numerical vs exact optimal trajectory}
\end{figure}
For this example, we choose $N=50$, $\e=10^{-5}$, $\gamma=20$,\footnote{Since $\frac{2Me}{\eta}\approx 15.28$.} and $\delta=10$, and apply Algorithm \ref{alg1} in order to numerically compute an approximation of both the minimum value and an optimal trajectory associated with $(P)$. Starting from $\gamma_1=20$, the algorithm reached the prescribed tolerance $\e$ after $12$ iterations, corresponding to the value $\gamma_{12}=130$. The obtained numerical cost is $$ g_{12}=g(\hat{x}_{\gamma_{12}}^N(3))=0.000000000000,$$
which is in excellent agreement with the exact minimum value of $(P)$ computed above. The total running time of the algorithm was $112.097$ seconds.\footnote{Machine: MacBook Pro, Apple M3 chip, 16GB Unified Memory.} Moreover, Fig.~\ref{Fig4} shows that the computed numerical optimal trajectory $\hat{x}_{\gamma_k}^N$ accurately reproduces the qualitative behavior of the exact optimal trajectory $\bar x$, including the constrained motion along the nonsmooth part of the boundary, the boundary-to-interior transition motion, and the final evolution inside the sweeping set, while Fig.~\ref{Fig5} illustrates the very rapid convergence of the numerical minimum values toward the exact minimum value $0$ as the penalization parameter $\gamma_k$ increases. Altogether, these numerical results confirm the effectiveness of the proposed approximation method in capturing the complex dynamics generated by moving nonsmooth sweeping sets.
\begin{figure}[h!]
\centering
\includegraphics[width=70mm]{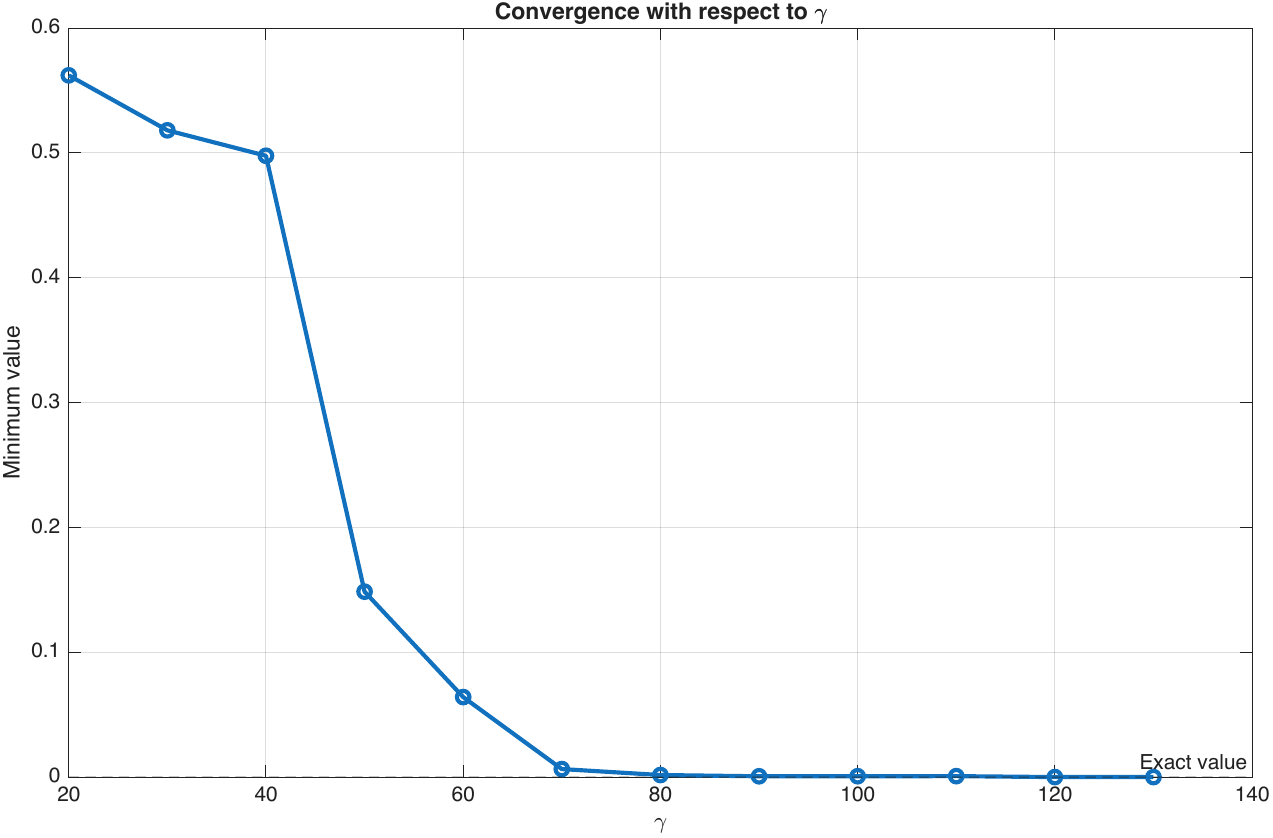}
\caption{\label{Fig5} Convergence with respect to $\gk$}
\end{figure}
\end{example}

\section{Proof of the Main Result} \label{proofsection}

In this section, we present the proof of Theorem \ref{th1}, which is completed at the end of the section. We begin by introducing the following function and families of sets. First, define the function $\psi\colon[0,T]\times\R^n\to\R$
by
\[\psi(t,x):=\max\{\psi_i(t,x): i=1,\dots,r\},\;\, \forall (t,x)\in[0,T]\times\R^n.\]
Clearly, for every $t\in[0,T]$, we have $
C(t)=\{x\in\R^n:\psi(t,x)\le 0\}$. Next, for $i=1,\dots,r$, we define the sequence $(C_i(t,k))_k$ by
\[
C_i(t,k):=\{x\in\R^n:\psi_i(t,x)\le -\a_k\}.
\]
Moreover, we introduce the two families $(C^{\gk}(t))_k$ and $(C^{\gk}(t,k))_k$ defined by
\begin{equation*}\label{Cgkdef}
C^{\gk}(t):=
\{x\in\R^n:\psi_{\gk}(t,x)\le 0\}
=
\bigg\{
x\in\R^n:
\sum_{i=1}^{r}e^{\gk\psi_i(t,x)}
\le 1
\bigg\}, \end{equation*}
and
\begin{eqnarray*}\label{Cgkkdef}
C^{\gk}(t,k)&:=&\nonumber
\{x\in\R^n:\psi_{\gk}(t,x)\le -\a_k\}\\
&=&
\bigg\{
x\in\R^n:
\sum_{i=1}^{r}e^{\gk\psi_i(t,x)}
\le \frac{2M}{\eta\gk}
\bigg\}.
\end{eqnarray*}

\subsection{Preparatory Results}

This subsection is devoted to establishing the preliminary results needed for the proof of Theorem \ref{th1}. Compared with the autonomous framework treated in \cite{verachadinum2}, the main additional difficulty here lies in obtaining estimates that are uniform with respect to the time variable. In particular, several geometric properties of the moving sets $C(t)$, $C_i(t)$, $C_i(t,k)$, $C^{\gk}(t)$, and $C^{\gk}(t,k)$ must be established with constants independent of $t\in[0,T]$. Unlike the autonomous case treated in \cite{verachadinum2}, we restrict ourselves here to the preparatory results that are {\it directly needed} for the proof of the main theorem. Some of the following results are nonautonomous counterparts of preparatory results established in the autonomous setting for the Pontryagin maximum principle in \cite{verachadijca}, and for the numerical approximation method in \cite{verachadinum1,verachadinum2}. Related geometric estimates in the nonautonomous framework were later developed in \cite{verasamara} in connection with the extension of the Pontryagin maximum principle to moving sweeping sets. When the proofs follow by straightforward modifications of the previous arguments, we only indicate the necessary changes. The remaining results require new arguments in order to handle the explicit time dependence of the sweeping set and to obtain estimates that are uniform with respect to time.

We begin with a first collection of uniform geometric properties for the moving sets $C_i(t)$ and $C(t)$. Using Remark \ref{rem1}, and applying, for each $t\in [0,T]$,  \cite[Lemmas 3.3 and 3.4]{VCpaper} for $C_i(t)$ and \cite[Proposition 4.1]{verachadijca} for $C(t)$, we deduce the following proposition (see also \cite{adly} for the prox-regularity of sublevel sets).
\begin{proposition} \label{propo01} The following assertions hold:
\begin{enumerate}[$(i)$, leftmargin=\widthof{(ii)}+\labelsep]
\item For every $t\in[0,T]$, the compact sets $C(t)$ and $C_i(t)$ are both $\frac{\eta}{M_\psi}$-prox-regular with $C(t)=\clo(\inte C(t))$ and $C_i(t)=\clo(\inte C_i(t))$ for $i=1,\dots,r$.
\item For every $t\in[0,T]$, we have: \begin{enumerate}[$\bullet$]
\item $\bdry C_i(t)=\{x\in\R^n : \psi_i(t,x)=0\}\not=\emptyset$ for $i=1,\dots,r$.
\item $\inte C_i(t)=\{x\in\R^n : \psi_i(t,x)<0\}\not=\emptyset$ for $i=1,\dots,r$.
\item $\bdry C(t)=C(t)\cap \left(\bigcup_{i=1}^{r} \bdry C_i(t)\right)\not=\emptyset$.
\item $ \inte C(t)=\bigcap_{i=1}^{r}\inte C_i(t)\not=\emptyset$.
\item For $i=1,\dots,r$ and for all  $x\in\bdry C_i(t)$,  $$N_{C_i(t)}(x)=N^P_{C_i(t)}(x)=N^L_{C_i(t)}(x)=\{\lambda\nabla_x\psi_i(t,x) : \lambda\geq 0\}\not=\{0\}.$$
\item For all $x\in\bdry C(t)$, \begin{eqnarray*}N_{C(t)}(x)=N^P_{C(t)}(x)&=&N^L_{C(t)}(x)\\&=&\left\{\sum_{i\in\I_x^0}\lambda_i\nabla_x\psi_i(t,x) : \l_i\geq 0\right\}\not=\{0\}.\end{eqnarray*}
\end{enumerate}
\end{enumerate}
\end{proposition}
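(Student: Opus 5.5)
The plan is to fix $t\in[0,T]$ and reduce every assertion to the autonomous results already available, namely \cite[Lemmas 3.3 and 3.4]{VCpaper} for a single sublevel set and \cite[Proposition 4.1]{verachadijca} for the finite intersection. The key observation is that every constant entering those results is uniform in $t$ by (H3). For fixed $t$, the functions $\psi_i(t,\cdot)$ are $\CO^{1,1}$, and their gradients are $2M_\psi$-Lipschitz on $B(0;2R)$ by \eqref{H3ineq1}. Each $C_i(t)$ lies in $B(0;R)$, so in particular $C(t)$ and $C_i(t)$ are bounded and closed (by continuity of $\psi_i(t,\cdot)$), hence compact. By Remark \ref{rem1}, $\|\nabla_x\psi_i(t,c)\|>2\eta$ on $\{\psi_i(t,\cdot)=0\}$, and \eqref{H3ineq2} gives the positive linear independence condition with the same $\eta$. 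The hypotheses that $\bdry C_i(t)$ is connected for $n>1$, and convex for $n=1$, are exactly those required in the autonomous lemmas. Hence the autonomous statements apply verbatim to $C_i(t)$ and $C(t)$, with constants depending only on $\eta$ and $M_\psi$, and never on $t$.

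The steps, in order, are as follows.

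\textbf{(a) The single sets $C_i(t)$.} From the gradient bound at points where $\psi_i(t,x)=0$, the implicit function theorem shows that near any such point both strict signs of $\psi_i(t,\cdot)$ occur. This gives $\bdry C_i(t)=\{\psi_i(t,\cdot)=0\}$, $\inte C_i(t)=\{\psi_i(t,\cdot)<0\}$, and $C_i(t)=\clo(\inte C_i(t))$. Nonemptiness of the boundary follows from $C_i(t)\supset C(t)\neq\emptyset$ together with boundedness: a bounded nonempty set that is a proper subset of $\R^n$ has nonempty boundary. The normal cone formula is the standard one for a sublevel set under the nondegeneracy $\nabla_x\psi_i\neq0$.

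\textbf{(b) Prox-regularity of $C_i(t)$.} For $x\in\bdry C_i(t)$ and the unit normal $\zeta=\nabla_x\psi_i(t,x)/\|\nabla_x\psi_i(t,x)\|$, take any $y\in C_i(t)$. Since the segment $[x,y]$ lies in $B(0;R)$, the descent lemma with Lipschitz constant $2M_\psi$ gives
\[
0\ge\psi_i(t,y)\ge \langle\nabla_x\psi_i(t,x),y-x\rangle-M_\psi\|y-x\|^2 .
\]
Dividing by $\|\nabla_x\psi_i(t,x)\|>2\eta$ yields $\langle \zeta,y-x\rangle\le \frac{M_\psi}{2\eta}\|y-x\|^2$, which is exactly $\frac{\eta}{M_\psi}$-prox-regularity.

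\textbf{(c) The intersection $C(t)$.} The same argument works with $\zeta=\sum\lambda_i\nabla_x\psi_i(t,x)$ normalized, using convex weights on $\I^0_x(t)$ and \eqref{H3ineq2} to keep the denominator above $2\eta$. The normal cone formula and the identities $\bdry C(t)=C(t)\cap\bigcup_i\bdry C_i(t)$ and $\inte C(t)=\bigcap_i\inte C_i(t)$ follow from the Mangasarian--Fromovitz-type qualification encoded in \eqref{H3ineq2}. Indeed, by a Gordan alternative it yields a direction $v$ with $\langle\nabla_x\psi_i(t,x),v\rangle<0$ for all active $i$. Moving along $v$ shows $C(t)=\clo(\inte C(t))$ and $\inte C(t)\ne\emptyset$.

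The main obstacle is nonemptiness of $\inte C(t)$ and the description of $\bdry C(t)$. Points of $C(t)$ where several constraints are active, and more generally the global picture, depend on the qualification condition holding at \emph{every} point of $\{\I^0_x(t)\neq\emptyset\}$, including points outside $C(t)$; this is where \cite[Proposition 4.1]{verachadijca} is invoked rather than reproved. Beyond that, the only genuinely new check is that none of the constants produced depends on $t$, and this is guaranteed by the uniform form of (H3).
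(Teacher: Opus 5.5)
Your proposal is correct and follows the paper's route: fix $t$, apply \cite[Lemmas 3.3 and 3.4]{VCpaper} to $C_i(t)$ and \cite[Proposition 4.1]{verachadijca} to $C(t)$, and note via Remark~\ref{rem1} and (H3) that the resulting constants depend only on $\eta$ and $M_\psi$, not on $t$. Your direct descent-lemma and Gordan-alternative checks are a useful self-contained supplement; in fact they already settle the nonemptiness of $\inte C(t)$ that you describe as the main obstacle, so no extra appeal to the qualification condition at points outside $C(t)$ is needed.
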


The next lemma plays an important role in the proof of the main theorem. It provides a uniform quasiconvexity property for the boundaries of the moving sets $C_i(t)$. In contrast with the autonomous case, one cannot directly apply \cite[Theorem 5.8$(ii)$]{verachadinum2}, since here it is essential to obtain quasiconvexity constants that are independent of the time variable. This property will be crucial in establishing corresponding uniform quasiconvexity estimates for the approximating sets $C_i(t,k)$ used later in the proof of Theorem \ref{th1}.

\begin{lemma}\label{lem0} Assume that $n>1$. Then, for every $i=1,\ldots,r$, there exists $c_i>0$, independent of $t$, such that $\operatorname{bdry} C_i(t)$ is $c_i$-quasiconvex for every $t\in[0,T]$. More precisely, for every $t\in[0,T]$ and every $x,y\in\operatorname{bdry} C_i(t)$, there exists a $\CO^1$ curve $\Gamma\colon[0,1]\to\operatorname{bdry} C_i(t)$ such that
\[ \Gamma(0)=x,\;\;\Gamma(1)=y,\;\;\hbox{and}\;\;\ell(\Gamma):=\int_0^1\|\dot\Gamma(s)\|\,ds \le c_i\|y-x\|. \]
\end{lemma}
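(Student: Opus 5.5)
We prove the uniform quasiconvexity in two regimes: short distances via local graph charts whose size depends only on $\eta$, $M_\psi$ and $\bar M_\psi$, and long distances via a compactness/covering argument in $t$, where connectedness of $\bdry C_i(t)$ is used. Fix $i$ and write $S_t:=\bdry C_i(t)=\{x:\psi_i(t,x)=0\}\subset B(0;R)$.

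\textbf{Short distances.} The only data entering here are three uniform bounds: $\|\nabla_x\psi_i(t,c)\|>2\eta$ on $S_t$ (Remark \ref{rem1}), the Lipschitz bound \eqref{H3ineq1} with constant $2M_\psi$ on $B(0;2R)$, and $\|\nabla_x\psi_i\|\le\bar M_\psi$. From these there is a radius $\delta>0$, depending only on $\eta$ and $M_\psi$, such that $\|\nabla_x\psi_i(t,z)\|>\eta$ for all $z\in B_{\delta}(c)$, $c\in S_t$.
\begin{enumerate}[$\bullet$]
\item Let $\nu:=\nabla_x\psi_i(t,c)/\|\nabla_x\psi_i(t,c)\|$. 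On the cylinder around $c$ with axis $\nu$, the directional derivative $\langle\nabla_x\psi_i(t,z),\nu\rangle$ is at least $\eta$ after shrinking $\delta$ uniformly.
\item The implicit function theorem, in its quantitative Lipschitz form, then represents $S_t$ near $c$ as the graph $x=c+w+\phi(w)\nu$, with $w\perp\nu$ and $\|w\|<\delta'$.
\item The chart satisfies $\|\nabla\phi\|\le L$, where $\delta'$ and $L$ depend only on $\eta$, $M_\psi$ and $\bar M_\psi$, and $\phi$ is $\CO^1$.
\end{enumerate}
For $x,y\in S_t$ with $\|x-y\|<\delta'/2$, center the chart at $c=x$. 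The lift of the segment in the $w$-variables is a $\CO^1$ curve in $S_t$ of length at most $\sqrt{1+L^2}\,\|x-y\|$.

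\textbf{Long distances, and the main obstacle.} The hard case is $\|x-y\|\ge\delta'/2$. It suffices to bound, uniformly in $t$, the intrinsic (path) diameter of $S_t$ by some $D$; then $\ell(\Gamma)\le D\le (2D/\delta')\|x-y\|$. The plan has four steps.
\begin{enumerate}[$\bullet$]
\item \emph{Finite covering and chains.} For fixed $t$, cover the compact set $S_t$ by finitely many balls $B_{\delta'/4}(c_j)$ with $c_j\in S_t$; at most $K$ balls are needed, with $K$ depending only on $R$ and $\delta'$ by a volume count. Since $S_t$ is connected, any two points are joined by a chain of overlapping balls of length at most $K$.
\item \emph{Linking consecutive balls.} Consecutive centers in the chain are at distance less than $\delta'/2$, so the short-distance step links them by curves of length at most $\sqrt{1+L^2}\,\delta'/2$.
\item \emph{Uniform bound.} This gives $D\le (K+2)\sqrt{1+L^2}\,\delta'$, which is independent of $t$.
\item \emph{Regularity.} Concatenating the pieces yields a curve that is only piecewise $\CO^1$. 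Reparametrize each piece to have vanishing velocity at the junctions, or smooth the corners inside a chart; either way the curve becomes $\CO^1$ while the length grows by at most a factor of $2$.
\end{enumerate}
The point needing care is that connectedness is used only qualitatively, for each fixed $t$, while all the quantitative constants come from the uniform chart size. Chain connectivity of a connected set under an open cover is standard, so no continuity of $S_t$ in $t$ is required. Taking $c_i:=\max\{2\sqrt{1+L^2},\,4D/\delta'\}$ then gives the claim, with $c_i$ independent of $t$.
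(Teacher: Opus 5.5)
Your proposal is correct and follows essentially the same route as the paper. Both use uniform implicit-function charts to get the short-range bound $\sqrt{1+L^2}\,\|x-y\|$, then a volume-count covering plus the connectedness of $\bdry C_i(t)$ to bound the intrinsic diameter uniformly in $t$, then split at the chart radius. Your $\CO^1$ step, reparametrizing each piece to have zero velocity at the junctions, is tighter than the paper's vague smoothing: it gives a genuine $\CO^1$ curve without increasing the length at all.
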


\begin{proof}
Fix $i\in\{1,\ldots,r\}$. From Remark \ref{rem1}, we know that for every $t\in[0,T]$ and every $c\in\bdry C_i(t)$, we have
$$ \|\nabla_x\psi_i(t,c)\|>2\eta.$$
Moreover, from (H3) we also have
$$
\|\nabla_x\psi_i(t,x)-\nabla_x\psi_i(t,y)\|
\le 2M_\psi\|x-y\|,\;\;\forall x,y\in B(0;2R)\;\,\hbox{and}\,\;\forall t\in[0,T].
$$
Hence, for $\rho:=\min\left\{\frac{\eta}{2M_\psi},\frac{R}{2}\right\}$, we have for every $c\in\bdry C_i(t)$ and every
$x\in B(c;\rho)$ that $$
\|\nabla_x\psi_i(t,x)-\nabla_x\psi_i(t,c)\|
\le 2M_\psi\rho\leq \eta.
$$
Consequently, after a rotation sending the unit vector $\frac{\nabla_x\psi_i(t,c)}{\|\nabla_x\psi_i(t,c)\|}$ onto the last coordinate vector, the derivative of $\psi_i(t,\cdot)$ in the last coordinate direction satisfies $ \left|\partial_{x_n}\psi_i(t,x)\right|\ge \eta$ for every $x\in B(c;\rho)$. Therefore, by the implicit function theorem
with parameters uniform in $t$ and $c$, the set
$\bdry C_i(t)\cap B(c;\rho)$ is the graph of a $\CO^{1,1}$ function whose
Lipschitz constant and chart radius depend only on $\eta$ and $M_\psi$,
and not on $t$ or $c$. Thus there exist constants $\rho_0>0$ and $L_0>0,
$ depending only on $\eta$, $M_\psi$ and $R$, such that every point of 
$\bdry C_i(t)$ admits a local representation as a Lipschitz graph on a
ball of radius $\rho_0$, with Lipschitz constant at most $L_0$. In particular, if two points $x,y\in\bdry C_i(t)$ belong to the same
local chart, then they can be joined in $\bdry C_i(t)$ by a Lipschitz
curve whose length is bounded by
$$
\sqrt{1+L_0^2}\,\|x-y\|.
$$
We now pass from this local estimate to a global one, uniformly in
$t$. Since, by (H3),
$$
C_i(t)\subset B(0;R),\;\; \forall t\in[0,T],
$$
we also have
$$
\bdry C_i(t)\subset \overline B(0;R).
$$
Let $K_0:=\sqrt{1+L_0^2}$ and fix $t\in[0,T]$. Take a maximal $\frac{\rho_0}{4}$-separated family $\{a_1,\ldots,a_N\}\subset \bdry C_i(t)$. Then
\[ \bdry C_i(t)\subset \bigcup_{j=1}^N B\left(a_j;\frac{\rho_0}{4}\right),\]
and, by the usual volume comparison argument, $N$ is bounded above by a constant depending only on $n$, $R$ and $\rho_0$.  Indeed, denoting by $\vol$ the Lebesgue measure in $\R^n$, we have $$N\leq  \frac{\vol\left[B\left(0;R+\frac{\rho_0}{8}\right)\right]}{\vol\left[B\left(0;\frac{\rho_0}{8}\right)\right]}.$$
In particular, this bound is independent of $t$. For each $j$, set \[ U_j:=\bdry C_i(t)\cap B\left(a_j;\frac{\rho_0}{2}\right).\]
Since $U_j$ is contained in one of the above local charts, any two points of $U_j$ can be joined in $\bdry C_i(t)$ by a Lipschitz curve of length at most $K_0$ times their Euclidean distance. Since $\bdry C_i(t)$ is connected and $\{U_j\}_{j=1}^N$ is a finite relative open cover of $\bdry C_i(t)$, the intersection graph of this cover is connected. Hence, for any $x,y\in\bdry C_i(t)$, there exist indices $j_1,\ldots,j_m$, with $m\le N$, such that
\[ x\in U_{j_1},\;\;y\in U_{j_m},\;\;\hbox{and}\;\;U_{j_k}\cap U_{j_{k+1}}\neq\emptyset\;\;\text{for }k=1,\ldots,m-1. \]
Choose points \[
z_k\in U_{j_k}\cap U_{j_{k+1}}\;\;\hbox{for}\;\;  k=1,\ldots,m-1. \] Set $z_0:=x$ and $z_m:=y$. Since $z_{k-1},z_k\in U_{j_k}$, they can be joined inside $\bdry C_i(t)$ by a Lipschitz curve of length at most \[ K_0\|z_k-z_{k-1}\|. \] Moreover, because both points belong to $B\left(a_{j_k};\frac{\rho_0}{2}\right)$, we have \[ \|z_k-z_{k-1}\|\le \rho_0. \] Concatenating these curves gives a Lipschitz curve in $\bdry C_i(t)$ joining $x$ to $y$ with length bounded by \[\ell(\gamma)\le mK_0\rho_0\le NK_0\rho_0. \]
Therefore there exists $D_i>0$, independent of $t$, such that every two points of $\bdry C_i(t)$ can be joined in $\bdry C_i(t)$ by a Lipschitz curve of length at most $D_i$. On the other hand, let $x,y\in \bdry C_i(t)$ and suppose that $\|x-y\|<\frac{\rho_0}{2}$. Then $y\in B(x;\rho_0)$, and hence $x$ and $y$ belong to the same local graph representation of $\bdry C_i(t)$, namely the one centered at $x$. Therefore, by the local estimate above, they can be joined by a Lipschitz curve $\Gamma$ in $\bdry C_i(t)$ satisfying
$$\ell(\Gamma)\le K_0\|x-y\|.$$
If $\|x-y\|\ge\frac{\rho_0}{2}$, then the preceding global bound gives that $x$ and $y$ can be joined by a Lipschitz curve $\gamma$  in $\bdry C_i(t)$ satisfying \[ \ell(\Gamma)\le D_i \le \frac{2D_i}{\rho_0}\|x-y\|.\]
Thus every two points $x, y\in \bdry C_i(t)$ can be joined by a Lipschitz curve $\Gamma$  in $\bdry C_i(t)$ satisfying $\ell(\Gamma)\le c_i\|x-y\|$ where \[c_i:=\max\left\{K_0,\frac{2D_i}{\rho_0}\right\}\;\hbox{is independent of}\;t .\] 
Now, each local graph representation of $\bdry C_i(t)$ is a $\CO^{1,1}$ hypersurface. Hence, by smoothing the piecewise Lipschitz parametrization inside the local charts and preserving the endpoints, the Lipschitz curve can be approximated by a $\CO^1$ curve contained in $\bdry C_i(t)$. Possibly after increasing the constant $c_i$, but still independently of $t$, we obtain a $\CO^1$ curve $\Gamma$ satisfying $\ell(\Gamma)\le c_i\|x-y\|$. This proves the result.  \qed \end{proof}

We next establish several uniform geometric properties of the approximating sets $C_i(t,k)$ that will be used repeatedly in the sequel.

\begin{remark}
Since the family $(\psi_i)_{i=1}^r$ is finite, whenever needed, intermediate constants arising in the proofs may be chosen uniformly with respect to $i$ by taking the maximum over $i=1,\dots,r$. For simplicity of exposition, this will be done implicitly in the sequel.
\end{remark}

\begin{proposition}\label{prop02} The following assertions hold: 
\begin{enumerate}[$(i)$, leftmargin=\widthof{(iii)}+\labelsep]
\item For every $t\in [0,T]$ and for every $k$, the set $C_i(t,k)\subset \inte C_i(t)$ is compact for $i=1,\dots,r$.
\item $\displaystyle \bigg(\sup_{t\in[0,T]}\sup_{c\in\bdry C_i(t)} d(c,\bdry C_i(t,k))\bigg)\longrightarrow 0$ as $k\f\infty$, for $i=1,\dots,r$.
\item There exists $k_1\in\N$ such that for all $k\geq k_1$ we have for every $t\in [0,T]$ and every $i\in\{1,\dots,r\}$:
\begin{enumerate}[$\bullet$]
\item $\bdry C_i(t,k)=\{x\in\R^n : \psi_i(t,x)=-\a_k\}\not=\emptyset$.
\item $\inte C_i(t,k)=\{x\in\R^n : \psi(t,x)<-\a_k\}\not=\emptyset$.
\item $C_i(t,k)$ is $\frac{\eta}{4M_\psi} $-prox-regular with $C_i(t,k)=\clo(\inte C_i(t,k))$,  and for all $x\in \bdry C_i(t,k)$, $$N_{C_i(t,k)}(x)=N^P_{C_i(t,k)}(x)=N^L_{C_i(t,k)}(x)=\{\lambda\nabla_x\psi_i(t,x) : \lambda\geq 0\}.$$
\end{enumerate}
\item For $t\in [0,T]$,  $i\in\{1,\dots,r\}$, and $k\in\N$, let $\pi_k^{i,t}$ be the projection map from $\bdry C_i(t)$ onto $\bdry C_i(t,k)$. Then there exists $k_2\geq k_1$ such that for $k\geq k_2$, $\pi_k^{i,t}$ is uniformly bi-Lipschitz with respect to $t$, $k$, and $i$.
\item Assume that $n>1$. Then there exists $\beta>0$ such that for every $k\ge k_2$, $t\in[0,T]$, and $i=1,\dots,r$, $\operatorname{bdry} C_i(t,k)$ is $\beta$-quasiconvex. More precisely, for every $k\ge k_2$, $t\in[0,T]$, $i=1,\dots,r$, and every $x,y\in\operatorname{bdry} C_i(t,k)$, there exists a Lipschitz curve $\Gamma\colon[0,1]\to\operatorname{bdry} C_i(t,k)$ such that
\[ \Gamma(0)=x,\;\;\Gamma(1)=y,\;\;\hbox{and}\;\;
\ell(\Gamma):=\int_0^1\|\dot\Gamma(s)\|\,ds \le \beta\|y-x\|.\]
\end{enumerate}
\end{proposition}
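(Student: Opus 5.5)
The plan is to mirror the autonomous arguments of \cite{verachadinum1,verachadinum2,verachadijca}, tracking every constant so that it depends only on $\eta$, $M_\psi$, $\bar M_\psi$, $R$, $n$ and $r$, and never on $t$ or $k$. The single source of uniformity is a gradient lower bound near the boundary. Set $\rho:=\min\{\frac{\eta}{2M_\psi},\frac R2\}$. By Remark~\ref{rem1} and \eqref{H3ineq1}, we have $\|\nabla_x\psi_i(t,x)\|\ge\eta$ for every $t$ and every $x$ with $d(x,\bdry C_i(t))<\rho$. Combining this with a descent argument along $-\nabla_x\psi_i(t,\cdot)$, each $c\in\bdry C_i(t)$ is joined to a point $c'$ with $\psi_i(t,c')=-\a_k$ and $\|c-c'\|\le \a_k/\eta$. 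This requires $\a_k/\eta<\rho$, which fixes $k_1$ independently of $t$ because $\a_k\searrow0$.

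For $(i)$: $C_i(t,k)\subset\{\psi_i(t,\cdot)<0\}=\inte C_i(t)$ by Proposition~\ref{propo01}, and it is closed and contained in $B(0;R)$, hence compact. For $(ii)$: the descent argument gives $d(c,\bdry C_i(t,k))\le \a_k/\eta$ uniformly, since the segment from $c$ to $c'$ crosses the level $-\a_k$. For $(iii)$, the main step is to show that $\{x:-\a_k\le\psi_i(t,x)\le0\}$ lies in the $\rho$-tube around $\bdry C_i(t)$. Take $x$ in this set and move along the flow of $\nabla_x\psi_i/\|\nabla_x\psi_i\|^2$: while the gradient stays above $\eta$, the value $\psi_i$ increases at unit rate, so $\bdry C_i(t)$ is reached within distance $\a_k/\eta$. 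A continuity (bootstrap) argument keeps the gradient above $\eta$ along the way, provided $\a_k/\eta<\rho$. It follows that $\nabla_x\psi_i(t,x)\neq0$ on $\{\psi_i(t,\cdot)=-\a_k\}$, and this set is nonempty because $\inte C_i(t)\ne\emptyset$ and $\psi_i(t,\cdot)$ has negative values below $-\a_k$ (it takes its minimum at a critical point, which lies outside the tube). The standard results \cite[Lemmas 3.3, 3.4]{VCpaper}, applied with lower gradient bound $\eta$ in place of $2\eta$ and Lipschitz constant $2M_\psi$, then give the boundary and interior descriptions, the normal cone formula, $C_i(t,k)=\clo(\inte C_i(t,k))$, and prox-regularity with constant $\eta/(4M_\psi)$. (In the interior bullet I read $\psi$ as $\psi_i$.)

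For $(iv)$, I would define the map via the normal flow: $\pi_k^{i,t}(c)$ is the point where the integral curve of $-\nabla_x\psi_i/\|\nabla_x\psi_i\|^2$ started at $c$ hits level $-\a_k$. This is a $\CO^{1}$ map with displacement $O(\a_k)$. Its inverse is the reverse flow, and Gr\"onwall estimates with the bounds $\eta$, $2M_\psi$ and time $\a_k$ bound both Lipschitz constants by $e^{C\a_k}$ for a uniform $C$. Since the metric projection onto the prox-regular set $C_i(t,k)$ is Lipschitz on a uniform tube (constant $\approx 1/(1-4M_\psi\a_k/\eta)$), one may alternatively work with the metric projection and use the flow to obtain bijectivity and the Lipschitz inverse. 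Either way, $k_2$ only needs $\a_k$ small relative to $\rho$.

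For $(v)$: given $x,y\in\bdry C_i(t,k)$, set $x'=(\pi_k^{i,t})^{-1}(x)$ and $y'=(\pi_k^{i,t})^{-1}(y)$. Lemma~\ref{lem0} provides a curve $\Gamma'$ in $\bdry C_i(t)$ joining $x'$ to $y'$ with $\ell(\Gamma')\le c_i\|x'-y'\|\le c_iL\|x-y\|$. Then $\Gamma:=\pi_k^{i,t}\circ\Gamma'$ is Lipschitz and lies in $\bdry C_i(t,k)$, and $\ell(\Gamma)\le L^2c_i\|x-y\|$. So $\beta:=L^2\max_ic_i$ works.

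The main obstacle is $(iv)$: proving bijectivity of the projection and a uniform Lipschitz inverse. The flow construction is what I would try first, because it gives both the map and its inverse explicitly, with constants controlled only by $\eta$ and $M_\psi$.
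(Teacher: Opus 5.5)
Your $(i)$, $(ii)$ and $(v)$ are sound. In $(ii)$, your descent from $\bdry C_i(t)$ even gives the explicit rate $\alpha_k/\eta$, where the paper only invokes a contradiction argument. The problem is the main step of $(iii)$, which does not work as written.

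To show that the band $\{x:-\alpha_k\le\psi_i(t,x)\le0\}$ lies in the $\rho$-tube of $\bdry C_i(t)$, you start an ascent flow at an arbitrary $x$ in the band and bootstrap the bound $\|\nabla_x\psi_i(t,\cdot)\|\ge\eta$. But you only know that bound within distance $\rho$ of $\bdry C_i(t)$, which is exactly what has to be proved for $x$. The argument is circular. If $x$ lies deep inside $C_i(t)$, the gradient there can be tiny or zero, and the flow need not even be defined. Your descent in $(ii)$ works because it starts on $\bdry C_i(t)$, where the bound is known; the reverse direction is not symmetric.

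In fact the claim cannot follow from $\eta$, $M_\psi$, $R$ alone, so your programme of making $k_1$ depend only on these constants must fail. Take $\epsilon\in(0,\frac12)$ and the $t$-independent function $\psi(x)=\|x\|^2-1+(1-\epsilon)\beta(\|x\|)$ on $\R^2$. Here $\beta$ is a fixed smooth function equal to $1-2s^2$ near $0$, with $0\le\beta(s)\le1-2s^2$ and $\beta=0$ on $[\frac12,\infty)$. This $\psi$ describes the closed unit disk and satisfies (H3) with constants independent of $\epsilon$. Yet the origin is a critical point with $\psi(0)=-\epsilon$, at distance $1$ from the boundary. So any admissible $k_1$ must satisfy $\alpha_{k_1}<\epsilon$.

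The missing ingredient is compactness in $(t,x)$, i.e.\ the kind of contradiction argument the paper invokes for $(ii)$--$(iii)$.
\begin{itemize}
\item Suppose $k_j\to\infty$, $t_j\in[0,T]$ and $x_j$ satisfy $-\alpha_{k_j}\le\psi_i(t_j,x_j)\le0$ and $d(x_j,\bdry C_i(t_j))\ge\rho$. Extract $t_j\to t^*$ and $x_j\to x^*$.
\item Joint continuity gives $\psi_i(t^*,x^*)=0$. Hence $\|\nabla_x\psi_i(t^*,x^*)\|>2\eta$, because (H3) constrains the whole zero level set.
\item The uniform $\CO^{1,1}$ bound together with the $M_t$-Lipschitz dependence on $t$ yields $\|\nabla_x\psi_i(t,x)-\nabla_x\psi_i(s,x)\|\le4\sqrt{M_\psi M_t|t-s|}$ for $x\in\bar{B}(0;R)$ and $|t-s|$ small. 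So the gradient is at least $\eta$ near $(t^*,x^*)$.
\item A short ascent from $x_j$ then reaches $\bdry C_i(t_j)$ within distance $\alpha_{k_j}/\eta\to0$, a contradiction.
\end{itemize}
The same fact is what makes the reverse flow from $\{\psi_i(t,\cdot)=-\alpha_k\}$ well defined in your $(iv)$.

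Note also that $(iv)$ concerns the metric projection. Its inverse runs along the straight normal segment $p\mapsto p+a(p)\nabla_x\psi_i(t,p)/\|\nabla_x\psi_i(t,p)\|$, with $a(p)$ obtained from the implicit function theorem as in the paper. It does not run along the curved integral lines of your flow. Your flow map is a different uniformly bi-Lipschitz bijection; it is Lipschitz rather than $\CO^1$, since $\nabla_x\psi_i$ is only Lipschitz. It suffices for $(v)$, but it does not by itself prove $(iv)$ as stated.
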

\begin{proof}
Clearly, $(i)$ holds. The assertion $(ii)$ follows by a standard contradiction argument using the continuity of $\psi_i$, the fact that $\alpha_k\searrow0$, and the uniform local graph representation of $\bdry C_i(t)$ established in Lemma \ref{lem1}. The assertion $(iii)$ follows from $(ii)$ and by straightforward adaptations of the corresponding autonomous arguments in \cite{VCpaper,verachadinum2}. The only point requiring attention is the independence of the constant $k_1$ with respect to the time variable. Indeed, as shown in the proof of Lemma \ref{lem0}, the lower bound $\|\nabla_x\psi_i(t,x)\|\ge\eta$ near $\bdry C_i(t)$, together with the uniform $2M_\psi$-Lipschitz continuity of $\nabla_x\psi_i(t,\cdot)$ on $\overline{B}\left(0;\frac{3R}{2}\right)\supset C_i(t)+\frac{R}{2}\overline{B}$, holds uniformly with respect to $t\in[0,T]$. Therefore, all the estimates used in the autonomous proof remain valid with constants independent of $t$, which yields the existence of $k_1$ independent of $t$. We proceed to prove $(iv)$. From $(ii)$, we deduce the existence of $k_2\geq k_1$ such that $$ d(c,\bdry C_i(t,k)) < \frac{\eta}{8M_\psi},\;\,\forall c\in\bdry C_i(t),\;\forall t\in[0,T],\;\forall i\in\{1,\dots,r\}.$$
Hence, since $C_i(t,k)$ is $\frac{\eta}{4M_\psi}$-prox-regular, we obtain by \cite[Theorem 4.8]{prox} that the projection map $\pi_k^{i,t}\colon\bdry C_i(t)\to\bdry C_i(t,k)$ is single-valued and $2$-Lipschitz for every $t\in[0,T]$, $i\in\{1,\dots,r\}$ and for every $\displaystyle k\ge k_2$. We next prove that $\pi_k^{i,t}$ is onto. Let $c\in\bdry C_i(t,k)$ and define \[
\nu_k^{i,t}(c):=
\frac{\nabla_x\psi_i(t,c)}
{\|\nabla_x\psi_i(t,c)\|}. \]
Since $\psi_i(t,c)=-\alpha_k$, $\|\nabla_x\psi_i(t,c)\|\ge\eta$, and $\nabla_x\psi_i(t,\cdot)$ is uniformly $2M_\psi$-Lipschitz on $B(0;2R)$, it follows that the function $s\mapsto\psi_i(t,c+s\nu_k^{i,t}(c))$ is strictly increasing on a uniform interval $[0,\delta]$, where $\delta>0$ is independent of $t$, $k$, $c$, and $i$. Since $\alpha_k\downarrow0$, for $k$ sufficiently large there exists a unique number
$a_k^{i,t}(c)\in(0,\delta)$ such that
\[
\psi_i\bigl(t,c+a_k^{i,t}(c)\nu_k^{i,t}(c)\bigr)=0.
\]
Consequently,
\[
c+a_k^{i,t}(c)\nu_k^{i,t}(c)\in\bdry C_i(t).
\]
Increasing $k_2$ if necessary, we may assume that $a_k^{i,t}(c)<\frac{\eta}{4M_\psi}$ for all $t\in[0,T]$ and all $c\in\bdry C_i(t,k)$. Since $\nabla_x\psi_i(t,c)$ is an outward proximal normal to $C_i(t,k)$ at $c$ and $a_k^{i,t}(c)<\frac{\eta}{4M_\psi}$, the uniqueness of the projection onto $C_i(t,k)$ in its prox-regularity tube yields \[
\pi_k^{i,t}\bigl(c+a_k^{i,t}(c)\nu_k^{i,t}(c)\bigr)=c,
\]
proving that $\pi_k^{i,t}$ is onto. It remains to show that $(\pi_k^{i,t})^{-1}$ is uniformly Lipschitz. Applying the implicit function theorem to the equation
\[
\psi_i\bigl(t,c+a\nu_k^{i,t}(c)\bigr)=0,
\]
we deduce, using the uniform lower bound on the derivative with respect to $a$,
that the map $c\mapsto a_k^{i,t}(c)$ is Lipschitz on $\bdry C_i(t,k)$, with Lipschitz constant independent of $t$, $k$, and $i$. Since the map
$
c\mapsto \nu_k^{i,t}(c)
$
is also uniformly Lipschitz, it follows that
\[
(\pi_k^{i,t})^{-1}(c)
=
c+a_k^{i,t}(c)\nu_k^{i,t}(c)
\]
is uniformly Lipschitz on $\bdry C_i(t,k)$. Therefore, $\pi_k^{i,t}$ is uniformly bi-Lipschitz for all $k\ge k_2$. Finally, $(v)$ follows directly from Lemma \ref{lem0} and the uniform bi-Lipschitz property established in $(iv)$.  \qed  \end{proof}

We next establish several elementary properties of the families $(C^{\gk}(t))_k$ and $(C^{\gk}(t,k))_k$. In contrast with the previous results, the proofs here follow by straightforward adaptations of the corresponding autonomous arguments in \cite{verachadinum2}, since no additional geometric estimates depending on the time variable are required.

\begin{proposition}\label{prop03}
The following assertions hold\sp$:$
\begin{enumerate}[$(i)$, leftmargin=\widthof{(iii)}+\labelsep]
\item For every $t\in [0,T]$ and every $k$,  we have  \begin{equation} \label{cgk(k)toci(k)} C^{\gk}(t,k)\subset \inte C^{\gk}(t)\subset \inte C(t)\;\;\hbox{and}\;\; C^{\gk}(t,k)\subset \bigcap_{i=1}^{r} C_i(t,k).\end{equation}
\item For every $t\in[0,T]$, the sequence $(C^{\gk}(t,k))_k$ is a nondecreasing sequence whose  Painlev\'e-Kuratowski limit is $C(t)$ and satisfies
\begin{equation}\label{union.Cgk(k)}\inte C(t)=\bigcup_{k\in \N} \inte C^{\gk}(t,k)=\bigcup_{k\in \N} C^{\gk}(t,k). \end{equation}
\item For $c\in \bdry C(0)$, there exist ${k}_c\geq k_2$ such that \begin{equation*}\label{boundarypoint(k)}
 \left({c}+\sigma_k\frac{d_c}{\|d_c\|}\right)\in \inte C^{\gk}(0,k)\subset\inte C^{\gk}(0). \end{equation*}
\end{enumerate}
\end{proposition}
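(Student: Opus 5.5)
The plan is to work at a fixed time $t$ throughout and use only the elementary inequalities $\psi_i\le\psi\le\psi_{\gk}\le\psi+\frac{\ln r}{\gk}$ and the explicit form of $C^{\gk}(t)$ and $C^{\gk}(t,k)$ via $\sum_i e^{\gk\psi_i}$. Since (i) and (ii) are pointwise in $t$, the autonomous arguments of \cite{verachadinum2} carry over verbatim. Only (iii), which concerns $t=0$, needs the uniform constants and the choice of $\sigma_k$.

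For (i), take $x\in C^{\gk}(t,k)$, so $\psi_{\gk}(t,x)\le-\a_k<0$. By continuity of $\psi_{\gk}(t,\cdot)$, a neighborhood of $x$ satisfies $\psi_{\gk}(t,\cdot)<0$, hence $C^{\gk}(t,k)\subset\inte C^{\gk}(t)$. Since $\psi\le\psi_{\gk}$, any point with $\psi_{\gk}(t,\cdot)<0$ lies in $\{\psi(t,\cdot)<0\}=\bigcap_i\inte C_i(t)=\inte C(t)$ by Proposition \ref{propo01}. Moreover, $\psi_i\le\psi_{\gk}\le-\a_k$ gives $C^{\gk}(t,k)\subset\bigcap_i C_i(t,k)$.

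For (ii), monotonicity follows from two facts: $(\psi_{\gk})_k$ is nonincreasing in $k$, and $\a_k\searrow0$. Hence $\psi_{\gamma_{k+1}}\le\psi_{\gk}$, so $\psi_{\gk}\le-\a_k$ implies $\psi_{\gamma_{k+1}}\le-\a_{k+1}$. For the union, one inclusion is given by (i). Conversely, if $x\in\inte C(t)$ then $\psi(t,x)<0$, and since $\psi_{\gk}(t,x)+\a_k\le\psi(t,x)+\frac{\ln r}{\gk}+\a_k\to\psi(t,x)<0$, the point $x$ lies in $\inte C^{\gk}(t,k)$ for $k$ large. The Painlevé–Kuratowski limit then equals $\clo\big(\bigcup_k C^{\gk}(t,k)\big)=\clo(\inte C(t))=C(t)$, using nondecreasingness and Proposition \ref{propo01}(i).

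For (iii), which is the main obstacle, set $\nu:=d_c/\|d_c\|$. First I verify that $\<\nabla_x\psi_j(0,c),\nu\>\le-\kappa<0$ for every $j\in\I^0_c$, with $\kappa>0$ controlled by $\eta$. The reasoning is as follows. Each $v_j(c)$ is the projection of $-\nabla_x\psi_j(0,c)$ onto $T_{C(0)}(c)$, which is the polar of the normal cone in Proposition \ref{propo01}. Hence $-\nabla_x\psi_j-v_j\in N_{C(0)}(c)$, which gives $\<v_j,\nabla_x\psi_l\>\le0$ for all active $l$. Together with the positive-combination bound \eqref{H3ineq2}, this should give $\<\nabla_x\psi_j,d_c\>\le-\|v_j\|^2$ and a lower bound on $\|d_c\|$, in the spirit of \cite{verachadinum2}. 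Second, a Taylor expansion with the $2M_\psi$-Lipschitz gradient yields, for active $j$,
$\psi_j(0,c+\sigma_k\nu)\le-\kappa\sigma_k+M_\psi\sigma_k^2$.
For inactive $j$ we have $\psi_j(0,c)<0$, so by continuity $\psi_j(0,c+\sigma_k\nu)$ stays below a fixed negative number for $k$ large. Third, I must show $\psi_{\gk}(0,c+\sigma_k\nu)<-\a_k$. It suffices that $\psi(0,c+\sigma_k\nu)+\frac{\ln r}{\gk}<-\a_k$, that is, $\kappa\sigma_k-M_\psi\sigma_k^2>\frac{\ln r}{\gk}+\a_k$. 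By the definition \eqref{sigmadef}, $\sigma_k$ is a fixed multiple of $\frac{\ln r}{\gk}+\a_k$, so this reduces to checking that the multiplier $\frac{rM_\psi}{2\eta^2}$ exceeds $1/\kappa$ (using $M_\psi\ge\eta/R$ and $\bar M_\psi$ bounds if needed) and that $\sigma_k\to0$ absorbs the quadratic term. This forces $k\ge k_c$, and we take $k_c\ge k_2$. The delicate point is extracting an explicit $\kappa$ compatible with that constant. If the direct estimate falls short, I would follow the exact computation of the autonomous proof in \cite{verachadinum2}, since at $t=0$ nothing differs from that case. Finally, strict inequality gives membership in the interior, and (i) gives inclusion in $\inte C^{\gk}(0)$.
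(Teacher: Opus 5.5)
Parts (i) and (ii) are correct. They are the pointwise-in-$t$ adaptation of the autonomous argument that the paper appeals to; the paper gives no further detail. One small repair in (i): $\inte C^{\gk}(t)\subset\inte C(t)$ follows directly from $C^{\gk}(t)\subset C(t)$. What you wrote only shows $\{\psi_{\gk}(t,\cdot)<0\}\subset\inte C(t)$, and that set need not exhaust $\inte C^{\gk}(t)$.

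Part (iii) has a genuine gap at exactly the step you left open. With the coefficient you are using it cannot be closed.

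\emph{The needed bounds.} The estimate you propose goes the wrong way. From $\langle\nabla_x\psi_j(0,c),d_c\rangle\le-\|v_j\|^2$ you need a \emph{lower} bound on $\|v_j\|$ and an \emph{upper} bound on $\|d_c\|$; a lower bound on $\|d_c\|$ is useless. Both required bounds are available:
\begin{itemize}
\item By the Moreau decomposition, $\|v_j\|$ is the distance from $-\nabla_x\psi_j(0,c)$ to $N_{C(0)}(c)$. For $w=\sum_{l\in\I^0_c}\lambda_l\nabla_x\psi_l(0,c)$ with $\lambda_l\ge0$, applying \eqref{H3ineq2} to the normalized combination gives $\|\nabla_x\psi_j(0,c)+w\|>2\eta(1+\sum_l\lambda_l)$. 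Hence $\|v_j\|\ge2\eta$.
\item Nonexpansiveness of the projection gives $\|v_l\|\le\bar M_\psi$, hence $\|d_c\|\le r\bar M_\psi$.
\end{itemize}
Together these give $\kappa=\frac{4\eta^2}{r\bar M_\psi}$.

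\emph{Why the printed coefficient fails.} Write $\tau_k:=\frac{\ln r}{\gk}+\a_k$. With $\sigma_k=\frac{rM_\psi}{2\eta^2}\tau_k$, as you read \eqref{sigmadef}, you only get $\kappa\sigma_k=\frac{2M_\psi}{\bar M_\psi}\tau_k$. Neither $M_\psi\ge\eta/R$ nor $\bar M_\psi\ge2\eta$ forces $2M_\psi>\bar M_\psi$.

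In fact (iii) is false with that coefficient. Take $n=2$, $r=1$, $\psi_1(t,x)=\frac12(\|x\|^2-4)$, $R=3$, $M_\psi=\frac12$, $\eta=\frac{9}{10}$; then (H3) holds, as do $\bar M_\psi\ge2\eta$ and $M_\psi\ge\eta/R$. For $c\in\bdry C(0)$ one has $\nu=-c/2$ and $\sigma_k=\frac{25}{81}\a_k$. Therefore
$\psi_{\gk}(0,c+\sigma_k\nu)=\psi_1(0,c+\sigma_k\nu)=-2\sigma_k+\frac12\sigma_k^2>-\a_k$,
so $c+\sigma_k\nu\notin C^{\gk}(0,k)$ for every $k$. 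Falling back on the autonomous computation does not help, because the obstruction is the constant, not the time dependence.

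\emph{The repair.} Your argument closes once the coefficient in \eqref{sigmadef} is $\frac{r\bar M_\psi}{2\eta^2}$, which is what the bound $\|d_c\|\le r\bar M_\psi$ dictates.
\begin{itemize}
\item Then $\kappa\sigma_k=2\tau_k$, and your Taylor estimate gives $\psi_j(0,c+\sigma_k\nu)\le-2\tau_k+M_\psi\sigma_k^2<-\tau_k$ for active $j$ and all large $k$.
\item The inactive indices are handled by continuity, as you say.
\item Finally, $\psi_{\gk}\le\psi+\frac{\ln r}{\gk}$ yields $\psi_{\gk}(0,c+\sigma_k\nu)<-\a_k$, hence membership in $\inte C^{\gk}(0,k)$.
\end{itemize}
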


For the sequence $(x_0^k)_k$ associated with $x_0$ and defined in \eqref{initial}, the following lemma follows from the fact that $\sigma_k\f0$, together with \eqref{union.Cgk(k)} and Proposition \ref{prop03}$(iii)$.

\begin{lemma}\label{lem1} The sequence $x_0^k$ converges to $x_0$, and there exists $k_3\geq k_{x_0}$ such that $x_0^k\in C^{\gk}(k)$ for all $k\geq k_3$.
\end{lemma}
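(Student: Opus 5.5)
The lemma states that $x_0^k\in C^{\gk}(k)$, which I read as $C^{\gk}(0,k)$ because the initial condition is imposed at $t=0$. I will treat the two cases of \eqref{initial} separately.

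\emph{Convergence.} If $x_0\in\inte C(0)$, then $x_0^k=x_0$ for every $k$, and there is nothing to prove. If $x_0\in\bdry C(0)$, then
$$\|x_0^k-x_0\|=\sigma_k\,\frac{\|d_{x_0}\|}{\|d_{x_0}\|}=\sigma_k.$$
Here $d_{x_0}\neq 0$ by construction. From \eqref{sigmadef} we have $\sigma_k\searrow 0$, because $\a_k\to0$ and $\ln(r)/\gk\to0$. Hence $x_0^k\to x_0$.

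\emph{Membership, interior case.} Suppose $x_0\in\inte C(0)$. By \eqref{union.Cgk(k)} in Proposition \ref{prop03}$(ii)$ at $t=0$, there is an index $\bar k$ with $x_0\in C^{\gk}(0,\bar k)$. By Proposition \ref{prop03}$(ii)$ again, the sequence $(C^{\gk}(0,k))_k$ is nondecreasing, so $x_0\in C^{\gk}(0,k)$ for all $k\ge\bar k$. Note that $k_{x_0}$ is only defined in Proposition \ref{prop03}$(iii)$ for boundary points. In this case I therefore set $k_{x_0}:=k_2$ by convention and take $k_3:=\max\{\bar k,k_2\}$.

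\emph{Membership, boundary case.} Suppose $x_0\in\bdry C(0)$. Proposition \ref{prop03}$(iii)$ applied with $c=x_0$ gives the index $k_{x_0}\ge k_2$. I read its conclusion as holding for every $k\ge k_{x_0}$, which is its intended meaning. It then gives
$$x_0^k=x_0+\sigma_k\frac{d_{x_0}}{\|d_{x_0}\|}\in\inte C^{\gk}(0,k)\subset C^{\gk}(0,k)\quad\text{for all } k\ge k_{x_0}.$$
So $k_3:=k_{x_0}$ works.

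\emph{Main point of care.} The only delicate step is making sure Proposition \ref{prop03}$(iii)$ is used for all large $k$, not for a single index. If it were stated only for one $k$, I would not fall back on monotonicity in $k$: that would need a fixed point, whereas $x_0^k$ moves with $k$. Instead I would re-derive the inclusion directly. This means checking that $\psi_{\gk}(0,x_0^k)\le-\a_k$, using a first-order expansion of each $\psi_j(0,\cdot)$ along $d_{x_0}/\|d_{x_0}\|$. The facts needed are the bound \eqref{H3ineq1}, the tangent-cone structure of $d_{x_0}$, and the choice $\sigma_k=\frac{rM_\psi}{2\eta^2}\big(\frac{\ln r}{\gk}+\a_k\big)$, which is calibrated to absorb the $\ln(r)/\gk$ gap between $\psi$ and $\psi_{\gk}$. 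I expect this to follow exactly as in the autonomous argument of \cite{verachadinum2}, since only the time $t=0$ is involved.
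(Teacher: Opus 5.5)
Your proof is correct and matches the paper's argument: convergence follows from $\sigma_k\to0$, the interior case from \eqref{union.Cgk(k)} together with the monotonicity of $(C^{\gk}(0,k))_k$, and the boundary case from Proposition \ref{prop03}$(iii)$, read as you do for all $k\ge k_{x_0}$. Your reading of $C^{\gk}(k)$ as $C^{\gk}(0,k)$ and your convention $k_{x_0}:=k_2$ in the interior case are the right way to make the paper's terse statement precise, and the fallback argument in your last paragraph is not needed.
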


\subsection{Key Results} 
Parallel to \cite[Subsection 5.2]{verachadinum2}, we establish in this subsection the main auxiliary results underlying our numerical approximation method for $(P)$. Compared with the autonomous framework treated in \cite{verachadinum2}, the explicit time dependence of the sweeping set creates substantial additional difficulties, since several geometric and analytical quantities may a priori depend on the time variable. Therefore, particular attention will be devoted to obtaining estimates that are uniform with respect to $t\in[0,T]$.

Let $u\in\mathscr{U}$, and let $x_{\gk}$ be the solution of $(D_{\gk})$, given in \eqref{Dgk1}, corresponding to the initial point $x_0^k$ defined in \eqref{initial}. Associated with $x_{\gk}$, we introduce the nonnegative continuous function $\xi_{\gk}\colon[0,T]\to\R$ defined by
\begin{equation*}
\xi_{\gk}(t):=
\gk e^{\gk\psi_{\gk}(t,x_{\gk}(t))}
=
\sum_{i=1}^{r}\xi_{\gk}^i(t),\;\,\forall t\in[0,T],
\end{equation*}
where, for $i=1,\dots,r$,
\[
\xi_{\gk}^i(t):=
\gk e^{\gk\psi_i(t,x_{\gk}(t))},\;\,\forall t\in[0,T].
\]

The following proposition, which coincides with \cite[Proposition 5.6]{verachadinum2}, follows, like the latter, along the same lines as \cite[Theorem 4.13]{verachadijca}. The main additional point in the present nonautonomous setting is to verify that all estimates used in that argument, including those ensuring the invariance of the penalized trajectories in the moving inner approximations, can be chosen uniformly with respect to $t\in[0,T]$. This follows from the uniform assumptions in (H3), in particular from the uniform regularity of the defining functions $\psi_i$ with respect to both $t$ and $x$. In the invariance argument, the time dependence produces an additional term controlled by the constant $M_t$, which is absorbed into the constant $M$ according to Remark \ref{Mt}. More details in a more general setting can be found in \cite[Section 3.2]{verasamara}.

\begin{proposition}  \label{propo1}
There exists $k_4\geq k_3$ such that for all $k\geq k_4$ and for all $u\in \mathscr{U}$, the solution $x_{\gk}$ of $(D_{\gk})$ corresponding to $u$ satisfies:
\begin{enumerate}[$(i)$, leftmargin=\widthof{(iii)}+\labelsep]
\item $x_{\gk}(t)\in C^{\gk}(t,k)\subset \inte C^{\gk}(t)\subset \inte C(t)$ for all $t\in[0,T]$.
\item $0< \xi_{\gk}^i(t)\le \xi_{\gk}(t)\le \frac{2M}{\eta}$ for all $t\in[0,T]$ and for $i=1,\dots,r.$
\item $\|\dot{x}_{\gk}(t)\|\le M+\frac{2M\bar{M}_\psi}{\eta}$ \,for \textnormal{a.e.} $t\in[0,T]$.
\end{enumerate}
\end{proposition}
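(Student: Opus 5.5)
The core of the argument is an invariance property for the penalized trajectory: I will show that the moving set $C^{\gk}(t,k)$ is forward invariant for $(D_{\gk})$, uniformly in $u\in\mathscr U$ and $t$. Items $(ii)$ and $(iii)$ then follow from $(i)$ by short computations. Fix $u\in\mathscr U$ and $k\ge k_3$, and let $x_{\gk}$ solve $(D_{\gk})$. By Lemma~\ref{lem1}, $x_{\gk}(0)=x_0^k\in C^{\gk}(0,k)$.

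To prove $(i)$, I will argue by contradiction. Suppose there is $t_1$ with $\psi_{\gk}(t_1,x_{\gk}(t_1))>-\a_k$. Let $t_0<t_1$ be the last time with $\psi_{\gk}(t_0,x_{\gk}(t_0))=-\a_k$, so that $\psi_{\gk}(t,x_{\gk}(t))>-\a_k$ on $]t_0,t_1]$. On this interval, consider $\theta(t):=\psi_{\gk}(t,x_{\gk}(t))$. Since $\psi_{\gk}$ is only Lipschitz in $t$, I will work with Dini derivatives. The time contribution is bounded by $M_t$, because each $\psi_i$ is $M_t$-Lipschitz in $t$ and the log-sum-exp of $M_t$-Lipschitz functions stays $M_t$-Lipschitz. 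The space contribution is
$$\langle\nabla_x\psi_{\gk},f\rangle-\gk e^{\gk\psi_{\gk}}\|\nabla_x\psi_{\gk}\|^2.$$

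On $]t_0,t_1]$ we have $\gk e^{\gk\psi_{\gk}}>\gk e^{-\a_k\gk}=\frac{2M}{\eta}$. The key step, which I expect to be the main obstacle, is a uniform lower bound $\|\nabla_x\psi_{\gk}(t,x)\|\ge\eta$ for all $x$ near $\bdry C^{\gk}(t,k)$, valid for $k$ large and uniformly in $t$.
\begin{itemize}
\item By \eqref{gradpsigk}, $\nabla_x\psi_{\gk}$ is a convex combination of the $\nabla_x\psi_i$, with weights on indices $i$ with $\psi_i$ far below $\psi_{\gk}$ that are exponentially small in $\gk$.
\item The indices with nonnegligible weight have $\psi_i(t,x)$ close to $0$, and hence close to active indices at a nearby point $c$ of $\bdry C(t)$. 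This uses Proposition~\ref{prop02}$(ii)$, together with a uniform-in-$t$ compactness argument combining continuity in $(t,x)$ with the bound $M_t$.
\item Condition \eqref{H3ineq2} then gives a norm of at least $2\eta$ at $c$. The $2M_\psi$-Lipschitz bound \eqref{H3ineq1}, together with the smallness of $\a_k$ and $\ln(r)/\gk$, reduces this to at least $\eta$.
\end{itemize}
I will prove this via a contradiction over sequences $(t_k,x_k,\lambda^k)$, extracting convergent subsequences in the compact set $[0,T]\times\bar B(0;R)$.

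With the gradient bound in hand, the Dini derivative of $\theta$ satisfies
$$\le \|\nabla_x\psi_{\gk}\|\big(M'-\tfrac{2M}{\eta}\eta\big)+M_t,$$
where $M'$ denotes the original constant from (H1). With the enlarged constant $M=M'+M_t/\eta$ of Remark~\ref{Mt}, and since $\|\nabla_x\psi_{\gk}\|\ge\eta$, this expression is at most
$$M'\|\nabla_x\psi_{\gk}\|-2M\|\nabla_x\psi_{\gk}\|+M_t\le -M\eta+M_t<0.$$
Hence $\theta$ is nonincreasing on $]t_0,t_1]$, contradicting $\theta(t_1)>-\a_k=\theta(t_0)$. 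The remaining inclusions in $(i)$ are Proposition~\ref{prop03}$(i)$.

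For $(ii)$, membership in $C^{\gk}(t,k)$ gives $\sum_i e^{\gk\psi_i}\le\frac{2M}{\eta\gk}$. Multiplying by $\gk$ yields $\xi_{\gk}\le\frac{2M}{\eta}$, and each term $\xi^i_{\gk}$ is positive. For $(iii)$, since $x_{\gk}(t)\in\bar B(0;R)$, we have $\|\nabla_x\psi_i\|\le\bar M_\psi$, so
$$\|\dot x_{\gk}\|\le M+\xi_{\gk}\bar M_\psi\le M+\frac{2M\bar M_\psi}{\eta}.$$
Finally, $k_4$ is taken as the maximum of $k_3$ and the threshold from the uniform gradient bound.
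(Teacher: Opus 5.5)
Your proposal is correct and follows the route the paper takes by deferring to \cite[Theorem 4.13]{verachadijca} and \cite[Section 3.2]{verasamara}: you prove invariance of $C^{\gk}(t,k)$ by a last-exit-time contradiction, obtain a uniform-in-$t$ lower bound $\|\nabla_x\psi_{\gk}\|\ge\eta$ near its boundary by compactness from \eqref{H3ineq2}, and bound the extra time-derivative term by $M_t$, absorbing it through the enlarged constant of Remark~\ref{Mt}; items $(ii)$ and $(iii)$ then follow immediately from $(i)$. When writing it up, make explicit two small points: that you shrink $t_1$ toward $t_0$ so the trajectory stays in the region where the gradient bound (and $\|f\|\le M'$) holds, and that the compactness step needs joint continuity of $\nabla_x\psi_i$ in $(t,x)$, which follows from the $M_t$-Lipschitz bound in $t$ combined with the uniform $2M_\psi$-Lipschitz bound on $\nabla_x\psi_i(t,\cdot)$.
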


We now generalize \cite[Proposition 5.7]{verachadinum2} to the present nonautonomous setting with moving sweeping sets.

\begin{proposition}  \label{propo2} 
For all $k\geq k_4$ and for all $u\in \mathscr{U}$, the solution $x$ of system $(D)$ and the solution $x_{\gk}$ of system $(D_{\gk})$, both corresponding to the same control $u\in\U$, satisfy 
\begin{equation*}\label{estimate} \|x_{\gk}(t)-x(t)\|^2\leq e^{\tilde{M}T} \sigma_k^2 + \frac{8\eta M (e^{\tilde{M}T}-1)}{\tilde{M}M_{\psi}}\sigma_k,\;\,\forall t\in [0,T], \end{equation*}
where $\tilde{M}:=\frac{5MM_\psi}{ \eta}+2{M}$.
\end{proposition}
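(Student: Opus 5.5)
We will estimate the Lyapunov-type quantity $\phi(t):=\|x_{\gk}(t)-x(t)\|^2$ and close the argument with Gronwall's lemma. This follows the autonomous proof of \cite[Proposition 5.7]{verachadinum2}, keeping track of the time-dependent terms.

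\textbf{Initial value.} First, $\phi(0)=\|x_0^k-x_0\|^2\le\sigma_k^2$ by \eqref{initial}; it equals $0$ if $x_0\in\inte C(0)$. This produces the term $e^{\tilde M T}\sigma_k^2$.

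\textbf{Splitting the derivative.} For a.e. $t$, write $\dot x(t)=f(x(t),u(t))-\zeta(t)$ with $\zeta(t)\in N_{C(t)}(x(t))$, and recall $\dot x_{\gk}(t)=f(x_{\gk}(t),u(t))-\sum_i\xi^i_{\gk}(t)\nabla_x\psi_i(t,x_{\gk}(t))$. Then
\[
\tfrac12\dot\phi=\<x_{\gk}-x,\,f(x_{\gk},u)-f(x,u)\>-\sum_{i}\xi^i_{\gk}\<x_{\gk}-x,\nabla_x\psi_i(t,x_{\gk})\>+\<x_{\gk}-x,\zeta\>,
\]
and we treat the three terms separately.
\begin{itemize}
\item[(a)] The $f$-term is at most $M\phi$ by (H1).
\item[(b)] For the $\zeta$-term, use $x_{\gk}(t)\in\inte C(t)$ (Proposition~\ref{propo1}(i)), the $\frac{\eta}{M_\psi}$-prox-regularity of $C(t)$ from Proposition~\ref{propo01}(i), and the bound $\|\zeta(t)\|\le M$, which holds for sweeping processes with Lipschitz-in-time sets once $M$ absorbs $M_t/\eta$ as in Remark~\ref{Mt}. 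The proximal normal inequality then gives $\<\zeta,x_{\gk}-x\>\le\frac{M_\psi\|\zeta\|}{2\eta}\phi\le \frac{MM_\psi}{2\eta}\phi$.
\item[(c)] For each $i$, Taylor's expansion with the $2M_\psi$-Lipschitz gradient in \eqref{H3ineq1} gives
\[
-\<x_{\gk}-x,\nabla_x\psi_i(t,x_{\gk})\>\le \psi_i(t,x)-\psi_i(t,x_{\gk})+M_\psi\phi .
\]
Since $\psi_i(t,x)\le0$, multiplying by $\xi^i_{\gk}$ and summing leaves $\sum_i\xi^i_{\gk}\bigl(-\psi_i(t,x_{\gk})\bigr)+M_\psi\xi_{\gk}\phi$. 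By Proposition~\ref{propo1}(ii), $\xi_{\gk}\le\frac{2M}{\eta}$, so the second part is $\le\frac{2MM_\psi}{\eta}\phi$.
\end{itemize}

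\textbf{The residual term.} It remains to bound $\sum_i\xi^i_{\gk}(-\psi_i(t,x_{\gk}))$ by a multiple of $\sigma_k$, and this is the main obstacle. The function $s\mapsto s e^{-\gk s}$ is bounded by $\frac{1}{e\gk}$, so terms with $\psi_i(t,x_{\gk})$ very negative contribute at most $\frac{1}{e}$ each. That bound is not small, so the plan is instead as follows. Split the indices into those with $-\psi_i(t,x_{\gk})\le \a_k+\frac{\ln r}{\gk}$, each contributing at most $\frac{2M}{\eta}(\a_k+\frac{\ln r}{\gk})$, and those with $-\psi_i(t,x_{\gk})$ larger. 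For the latter, $\xi^i_{\gk}\le \gk e^{-\gk\a_k}/r=\frac{2M}{r\eta}$ and the product decays in $\gk$. Using $\sigma_k=\frac{rM_\psi}{2\eta^2}(\frac{\ln r}{\gk}+\a_k)$ and the normalization $M_\psi\ge\eta/R$, both groups should give a bound of the form $\frac{4\eta M}{M_\psi}\sigma_k$; the exact constant must be matched so that the final coefficient equals $\frac{8\eta M}{M_\psi}$.

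If the second group resists direct control, the fallback is to keep $\<x_{\gk}-x,\cdot\>$ unexpanded there. One would then use $\|x_{\gk}-x\|\le\sqrt{\phi}$ together with $2ab\le a^2+b^2$ to trade it into an extra $M\phi$ term; this is presumably where the additional $2M$ in $\tilde M$ comes from.

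\textbf{Conclusion.} Collecting the estimates gives $\dot\phi\le\tilde M\phi+\frac{8\eta M}{M_\psi}\sigma_k$ with $\tilde M=\frac{5MM_\psi}{\eta}+2M$ after generous rounding of the constants. Gronwall's lemma then yields exactly the stated inequality, uniformly in $u$, since all constants come from the $t$-uniform hypotheses (H1)–(H3) and Propositions~\ref{propo01} and \ref{propo1}.
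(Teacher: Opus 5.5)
Your skeleton is the argument the paper relies on: Gronwall for $\phi=\|x_{\gamma_k}-x\|^2$, using the Lipschitz bound on $f$, the proximal normal inequality for the uniformly $\frac{\eta}{M_\psi}$-prox-regular sets $C(t)$, and a Taylor expansion of each $\psi_i(t,\cdot)$ via \eqref{H3ineq1}. Your estimates (a)--(c) are correct.

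The gap is the residual $\sum_i\xi^i_{\gamma_k}(t)\bigl(-\psi_i(t,x_{\gamma_k}(t))\bigr)$. It is the only place where $\sigma_k$ enters, and you leave it unproved. In your splitting, the second group is controlled only by $\xi^i_{\gamma_k}\le\frac{2M}{r\eta}$ times $-\psi_i(t,x_{\gamma_k})$, which is merely bounded. So ``the product decays in $\gamma_k$'' does not follow from what you wrote. The fallback fails outright: applying $2ab\le a^2+b^2$ to $\xi^i_{\gamma_k}\|\nabla_x\psi_i\|\sqrt{\phi}$ leaves an additive term of order $(\xi^i_{\gamma_k}\bar M_\psi)^2=O(1)$ rather than $O(\sigma_k)$, and the resulting bound no longer tends to zero. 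Also, the $2M$ in $\tilde M$ is simply twice your $f$-term, since $\dot\phi=2\cdot\frac12\dot\phi$; no rounding of constants is needed anywhere.

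The missing idea is to use where $x_{\gamma_k}$ lies, not just the bound on $\xi_{\gamma_k}$. By Proposition~\ref{propo1}$(i)$ and \eqref{cgk(k)toci(k)}, $x_{\gamma_k}(t)\in C^{\gamma_k}(t,k)\subset C_i(t,k)$. Hence $s_i:=-\psi_i(t,x_{\gamma_k}(t))\ge\alpha_k$ for every $i$, and $\alpha_k\ge\frac{1}{\gamma_k}$ precisely because $\gamma_k\ge\frac{2Me}{\eta}$. Your bound $\frac1e$ comes from the maximum of $s\mapsto\gamma_k se^{-\gamma_k s}$ at $s=\frac1{\gamma_k}$. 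But the admissible range $s\ge\alpha_k$ lies on the decreasing branch, so for each $i$
\[
\xi^i_{\gamma_k}(-\psi_i)=\gamma_k s_ie^{-\gamma_k s_i}\le\gamma_k\alpha_ke^{-\gamma_k\alpha_k}=\frac{2M}{\eta}\alpha_k ,
\]
and no splitting is needed:
\[
\sum_{i=1}^r\xi^i_{\gamma_k}(t)\bigl(-\psi_i(t,x_{\gamma_k}(t))\bigr)\le\frac{2rM}{\eta}\alpha_k\le\frac{2rM}{\eta}\Bigl(\alpha_k+\frac{\ln r}{\gamma_k}\Bigr)=\frac{4\eta M}{M_\psi}\sigma_k .
\]
Doubling (a)--(c) and this bound gives, a.e.,
\[
\dot\phi\le\Bigl(2M+\frac{MM_\psi}{\eta}+\frac{4MM_\psi}{\eta}\Bigr)\phi+\frac{8\eta M}{M_\psi}\sigma_k=\tilde M\phi+\frac{8\eta M}{M_\psi}\sigma_k ,
\]
with $\phi(0)\le\sigma_k^2$. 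Gronwall's lemma then yields the statement with exactly the stated constants.
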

\begin{proof}
The proof is identical to that of Proposition 5.7 in \cite{verachadinum2}. Although the sweeping set depends explicitly on time, the additional difficulties can be overcome by means of \eqref{H3ineq1} together with the uniform prox-regularity of $C(t)$, $t\in[0,T]$, proved in Proposition \ref{propo01}. \qed
\end{proof}

We next extend \cite[Lemma 5.2]{verachadinum2} to the present nonautonomous setting with moving sweeping sets. In contrast with the autonomous case, a crucial point here is to obtain quasiconvexity estimates for the boundaries of the approximating sets $C_i(t,k)$ that are uniform with respect to the time variable. This is precisely ensured by Proposition \ref{prop02}$(v)$ and plays a fundamental role in the proof of the next lemma.

\begin{lemma} \label{lem2} Let $n>1$ and let $\b$ be the constant of \textnormal{Proposition \ref{prop02}$(v)$}. Then for every $k\geq k_4$, $t\in[0,T]$, and every $x,y\in C^{\gk}(t,k)$, we have $$\<\nabla_x e^{\gk\psi_{\gk}(t,y)}- \nabla_x e^{\gk \psi_{\gk}(t,x)},y-x\>\ge -\frac{4r\beta M M_{\psi}}{\eta}\|y -x\|^2.$$
\end{lemma}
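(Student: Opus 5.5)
Since $e^{\gk\psi_{\gk}(t,x)}=\sum_{i=1}^r e^{\gk\psi_i(t,x)}$, we have $\nabla_x e^{\gk\psi_{\gk}(t,x)}=\sum_{i=1}^r \gk e^{\gk\psi_i(t,x)}\nabla_x\psi_i(t,x)$. The inner product therefore splits into $r$ terms,
$$\<\nabla_x e^{\gk\psi_{\gk}(t,y)}-\nabla_x e^{\gk\psi_{\gk}(t,x)},y-x\>=\sum_{i=1}^r \<\nabla_x \phi_i(y)-\nabla_x\phi_i(x),y-x\>,\qquad \phi_i:=e^{\gk\psi_i(t,\cdot)}.$$
It suffices to prove the one-function estimate $\<\nabla\phi_i(y)-\nabla\phi_i(x),y-x\>\ge -\frac{4\beta MM_\psi}{\eta}\|y-x\|^2$ for $x,y\in C_i(t,k)$. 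This applies to all $x,y\in C^{\gk}(t,k)$ because, by \eqref{cgk(k)toci(k)}, $C^{\gk}(t,k)\subset C_i(t,k)$ for each $i$. We fix $t$, $i$, $k\ge k_4$ and follow the autonomous argument of \cite[Lemma 5.2]{verachadinum2}. All constants come from (H3) and Proposition \ref{prop02}, so they are uniform in $t$.

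The key tool is the monotonicity-type identity along a curve. If $\Gamma\colon[0,1]\to C_i(t,k)$ is Lipschitz with $\Gamma(0)=x$ and $\Gamma(1)=y$, then $\nabla\phi_i(y)-\nabla\phi_i(x)=\int_0^1 \frac{d}{ds}\nabla\phi_i(\Gamma(s))\,ds$. Here $\nabla\phi_i=\gk\phi_i\nabla_x\psi_i$, and its derivative along $\Gamma$ has two parts:
\begin{itemize}
\item a term $\gk^2\phi_i\<\nabla_x\psi_i,\dot\Gamma\>\nabla_x\psi_i$;
\item a term $\gk\phi_i\,\frac{d}{ds}\nabla_x\psi_i(t,\Gamma(s))$.
\end{itemize}
The first part is dangerous since it carries $\gk^2$. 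The second is bounded by $\gk\phi_i\,2M_\psi\|\dot\Gamma\|$. On $C_i(t,k)$ we have $\gk\phi_i\le \gk e^{-\a_k\gk}=\frac{2M}{\eta}$, so the second part contributes at most $\frac{4MM_\psi}{\eta}\ell(\Gamma)\|y-x\|$ to the inner product.

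We split into cases. If the segment $[x,y]$ lies in $C_i(t,k)$, take $\Gamma$ to be the segment. The $\gk^2$ term then gives $\gk^2\int\phi_i\<\nabla_x\psi_i,y-x\>^2\ge0$ and can be dropped. The second term gives the bound with constant $\frac{4MM_\psi}{\eta}$, and $\ell=\|y-x\|$.

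Otherwise we may reduce to $x,y\in\bdry C_i(t,k)$. Indeed, the segment leaves the set at a first boundary point $x'$ and re-enters at a last boundary point $y'$. The pieces $[x,x']$ and $[y',y]$ lie in $C_i(t,k)$, and since these points are collinear the segment cases handle the pieces. On $\bdry C_i(t,k)$ we have $\psi_i\equiv-\a_k$, so $\phi_i$ is constant with $\gk\phi_i=\frac{2M}{\eta}$. By Proposition \ref{prop02}$(v)$ there is a Lipschitz curve $\Gamma\subset\bdry C_i(t,k)$ joining $x'$ to $y'$ with $\ell(\Gamma)\le\beta\|y'-x'\|$. Along this curve the $\gk^2$ term vanishes, because $\frac{d}{ds}\psi_i(t,\Gamma(s))=\<\nabla_x\psi_i,\dot\Gamma\>=0$. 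We obtain
$$\<\nabla\phi_i(y')-\nabla\phi_i(x'),y'-x'\>=\tfrac{2M}{\eta}\<\nabla_x\psi_i(t,y')-\nabla_x\psi_i(t,x'),y'-x'\>\ge-\tfrac{2M}{\eta}2M_\psi\|y'-x'\|^2.$$
Here we may even use \eqref{H3ineq1} directly, since $\bdry C_i(t,k)\subset B(0;2R)$. The quasiconvexity with constant $\beta$ enters when the terms are recombined through the intermediate points.

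The main obstacle is recombining the three pieces. The inner product is not additive over a broken path unless the pieces are collinear, which they are here since $x,x',y',y$ lie on one line. Write $\nabla\phi_i(y)-\nabla\phi_i(x)$ as a telescoping sum over $x,x',y',y$ and pair each increment with the full vector $y-x$, which is a positive multiple of each sub-increment. On the inner pieces the increment estimate $\|\nabla\phi_i(b)-\nabla\phi_i(a)\|\le\frac{4MM_\psi}{\eta}\ell(\Gamma_{ab})$ yields a contribution of order $\beta\|y-x\|^2$. On the segment pieces the $\gk^2$ term is again nonnegative because of collinearity. Summing gives the factor $\frac{4\beta MM_\psi}{\eta}$ per index, with $\beta\ge1$ absorbing the segment constants. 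Finally, summing over $i$ gives the factor $r$. For $n=1$ the sets are intervals and only the segment case arises.
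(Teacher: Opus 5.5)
Your proof is correct, and its skeleton is the paper's. You split $e^{\gamma_k\psi_{\gamma_k}}$ into the $r$ terms $\phi_i$ and use $C^{\gamma_k}(t,k)\subset C_i(t,k)$. You then treat a segment inside $C_i(t,k)$, a pair of boundary points, and the general case via the first and last boundary points of $[x,y]$ and collinearity. In the segment case you use the a.e.\ product rule along the segment instead of Lebourg's mean value theorem with generalized Hessians; the two are equivalent.

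The genuine difference is in the boundary case. You observe that $\gamma_k e^{\gamma_k\psi_i(t,\cdot)}\equiv\gamma_k e^{-\gamma_k\alpha_k}=\frac{2M}{\eta}$ on $\operatorname{bdry}C_i(t,k)$. So the inner product is exactly $\frac{2M}{\eta}\langle\nabla_x\psi_i(t,y')-\nabla_x\psi_i(t,x'),y'-x'\rangle$, and \eqref{H3ineq1} on the convex ball $B(0;2R)$ bounds it below by $-\frac{4MM_\psi}{\eta}\|y'-x'\|^2$ without any curve. The paper's Case 2 integral is in fact this same quantity, since the exponential factor is constant along $\Gamma$; the curve there only serves to bound it through $\ell(\Gamma)$ instead of $\|y-x\|$.

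Consequently, your remark that quasiconvexity ``enters when the terms are recombined'' is inaccurate. Write $x'-x=\lambda_1(y-x)$, $y'-x'=\lambda_2(y-x)$, $y-y'=\lambda_3(y-x)$. Then the three pieces contribute at least $-\frac{4MM_\psi}{\eta}\lambda_j\|y-x\|^2$ each. Summing over $j$ and then over $i$ gives the lemma with $\beta$ replaced by $1$. This implies the stated bound, since any quasiconvexity constant is at least $1$. Your appeal to Proposition \ref{prop02}$(v)$ is therefore harmless but superfluous.

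What your route buys: under (H3) as stated, this lemma needs neither Lemma \ref{lem0} nor Proposition \ref{prop02}$(iv)$--$(v)$, hence not the connectedness of $\operatorname{bdry}C_i(t)$, and it gives a sharper constant. The paper's boundary-curve argument would only be needed if the Lipschitz bound on $\nabla_x\psi_i(t,\cdot)$ were known merely on a nonconvex neighborhood of the sets, rather than on a ball containing them.
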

\begin{proof}
We assume, without loss of generality, that $\beta\geq 1$. Fix $k\ge k_4$, $t\in [0,T]$, $i\in\{0,\dots,1\}$, and $x,y \in C_i(t,k)$, with $x\neq y$. Define
$$\textstyle  G_i(z):=\<\nabla_x e^{\gk \psi_i(t,z)}, y-x\>,\;\;\forall z\in\R^n.$$
Since $\psi_i(t,\cdot)$ is $\CO^{1,1}$, the {\it nonsmooth} product and chain rules (see \cite{clarkeold}) imply that, for $z\in \R^n$,  
\begin{equation} \textstyle\label{hessianepsi}\partial^2_xe^{\gk \psi_i(t,z)}\subset \gk e^{\gk \psi_i(t,z)}[\partial^2_x\psi_i(t,z)+\gk\nabla_x\psi_i(t,z)\otimes\nabla_x\psi_i(t,z)],
\end{equation}
where the tensor $\nabla_x\psi_i(t,z)\otimes\nabla_x\psi_i(t,z)$ is positive semi-definite, and, for any $$z\in C_i(t,k)\subset \inte C_i(t)\subset C(t)\subset B(0;R),$$ and $M_z\in\partial^2_x\psi_i(t,z)$, we have, by (H3), $\|M_z\|\le 2M_{\psi}$, and $$0<\gk e^{\gk \psi_i(t,z)}\leq \gk e^{-\gk\a_k}\leq\frac{2M}{\eta}.$$
\underline{Case 1:} $[x,y]\subset C_i(t,k)$.\vspace{0.1cm}\\
By Lebourg's mean value theorem, see \cite[Theorem 2.3.7]{clarkeold}, and \eqref{hessianepsi}, there exist $z\in]x,y[\subset C^{\gk}(t,k)$ and $\mathcal{M}_z\in\partial^2_xe^{\gk \psi_i(t,z)}$ such that 
\begin{eqnarray*}\nonumber  \<\nabla_x e^{\gk\psi_i(t,y)}- \nabla_x e^{\gk \psi_i(t,x)},y-x\>&=&\<\mathcal{M}_z(y-x),y-x\>\\&\ge&  -\frac{4M M_{\psi}}{\eta}\|y -x\|^2 \\&\geq&  -\frac{4\beta M M_{\psi}}{\eta}\|y -x\|^2.\label{gradepsi1}
\end{eqnarray*} 
\underline{Case 2:} $]x, y[ \not\subset C_i(t,k)$ and $x,\sp y\in\bdry C_i(t,k)$. \vspace{0.1cm}\\ 
By Proposition \ref{prop02}$(v)$, there exists a Lipschitz curve having length $\ell=\ell(\Gamma)$ with $\Gamma\colon [0,1]\to \bdry C_i(t,k)$ satisfying $\Gamma(0)=x$, $\Gamma(1)=y$, $\ell\le \beta \|x-y\|$, and
\begin{equation} \label{numnew1} \int_0^1\|\dot{\Gamma}(s)\|\sp ds=\ell\;\,\hbox{and}\;\,\<\nabla_x\psi_i(t,{\Gamma}(s)),\dot{\Gamma}(s)\>=0,\;\hbox{a.e.}\;s.\end{equation}
By the nonsmooth chain rule, (\ref{hessianepsi}), and (\ref{numnew1}), there exists $M_{{\Gamma}}\in L^{\infty}$ with $M_{{\Gamma}}(s)\in \partial^2_x\psi_i(t,{\Gamma}(s))$ a.e. such that  
 \begin{eqnarray*} \nonumber
\<\nabla_x e^{\gk\psi_i(t,y)}- \nabla_x e^{\gk \psi_i(t,x)},y-x\>&=&G_i(\Gamma(1))-G_i(\Gamma(0))\\&=&  \nonumber \int^1_0 \frac{dG(\Gamma(s))}{ds}\sp ds \\&= &\int_0^1\gk e^{\gk\psi_i(t,{\Gamma}(s))}\<M_{{\Gamma}}(s)\dot{{\Gamma}}(s),y-x\>\sp ds
 \nonumber\\&\geq&\frac{-4MM_\psi}{\eta}\|x-y\|\int_0^1\|\dot{\Gamma}(s)\|\sp ds\\&\geq&   -\frac{4\beta M M_{\psi}}{\eta}\|y -x\|^2.
\end{eqnarray*}
\underline{Case 3:} $]x, y[ \not\subset C_i(t,k)$ and $x$ or $y\not\in\bdry C_i(t,k)$. \vspace{0.1cm}\\  
Set $\x$ and $\y$  to be, respectively, the first and the last points of $[x,y]$ in $\bdry C_i(t,k)$. Now, by applying  Case 1 to $[x,\x]$ and $[\y,y]$, and Case 2  to $[\x,\y]$, and by noticing that $\x-x=\l_1(y-x)$, $\y-\x=\l_2(y-x)$ and $y-\y=\l_3(y-x)$, where $\l_i\geq 0$ for all $i\in\{1,2,3\}$  and $\sum_{i=1}^{3}\l_i=1$, we deduce that $$\<\nabla_x e^{\gk\psi_i(t,y)}- \nabla_x e^{\gk \psi_i(t,x)},y-x\> \geq   -\frac{4\beta M M_{\psi}}{\eta}\|y -x\|^2.$$
Therefore, for every $k\geq k_4$, $t\in [0,T]$, and every $i\in\{0,\dots,1\}$, we have
$$\<\nabla_x e^{\gk\psi_i(t,y)}- \nabla_x e^{\gk \psi_i(t,x)},y-x\>\ge -\frac{4\beta_i M M_{\psi}}{\eta}\|y -x\|^2,\;\,\forall x,\sp y\in C_i(t,k).$$
Using that $e^{\gk\psi_{\gk}(\cdot,\cdot)}=\sum_{i=1}^re^{\gk\psi_i(\cdot,\cdot)}$ and the inclusion \eqref{cgk(k)toci(k)}, we conclude that for evrey $k\geq k_4$, $t\in[0,T]$, and every $x,y\in C^{\gk}(t,k)$, we have $$\<\nabla_x e^{\gk\psi_{\gk}(t,y)}- \nabla_x e^{\gk \psi_{\gk}(t,x)},y-x\>\ge -\frac{4r\beta M M_{\psi}}{\eta}\|y -x\|^2.$$
This terminates the proof of the lemma.\qed \end{proof}

\begin{remark}\label{convex}
Assume that for every $i=1,\dots,r$ and every $t\in[0,T]$, the set $C_i(t)$ is convex (equivalently, $\psi_i(t,\cdot)$ is convex). Then Lemma \ref{lem2} becomes immediate. Indeed, in this case the function $\psi_{\gk}(t,\cdot)$ is convex for every $t\in[0,T]$, see \cite[Lemma 3]{LiFang}. Consequently, the function $e^{\gk\psi_{\gk}(t,\cdot)}$ is convex, and therefore
$$\<\nabla_x e^{\gk\psi_{\gk}(t,y)}-\nabla_x e^{\gk \psi_{\gk}(t,x)},y-x\>\geq 0,
\;\,\forall x,y\in\R^n.$$
Moreover, when $n>1$, the convexity of $C_i(t)$ implies that $\bdry C_i(t)$ is connected.
\end{remark}

As in \cite{verachadinum1,verachadinum2}, Lemma \ref{lem2} yields the following proposition, which constitutes a crucial step in the construction of the discrete approximations. Its proof follows arguments similar to those used in the proofs of \cite[Proposition 3]{verachadinum1} and \cite[Proposition 5.8]{verachadinum2}.

\begin{proposition} \label{propo3} Let $u\in \mathscr{U}$ and, for $k\ge {k}_4$, let $x_{\gk}$ be the solution of $(D_{\gk})$ corresponding to $u$. Then, for $N\in \N$, there exists  $u^N\in\U^N$ such that $x^N_{\gk}$, the solution of $(D_{\gk})$ corresponding 
to  $u^N,$ satisfies for $\delta_j:=\|x_{\gk}(jh)-x^N_{\gk}(jh)\|^2$ the inequality  
\begin{equation*} \label{xN-xgkN}
\delta_j\leq \frac{2\hat{M}^2 e^{6\bar{M}T}}{3\bar{M}}  (1+6 \bar{M} h)(1+\bar{M}h)h,\,\;\;\hbox{for}\;\,j=1,\dots,N,\end{equation*}
where $\bar{M}:=M+\frac{4r\beta MM_{\psi}}{\eta}$,  $\hat{M}:={M}+\frac{2{M}\bar{M}_\psi}{\eta}$, and $h:=\frac{T}{N}$. \end{proposition}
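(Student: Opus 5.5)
The plan is to compare $x_{\gk}$ and $x^N_{\gk}$ interval by interval, using the monotonicity-type estimate of Lemma~\ref{lem2} in place of a Lipschitz constant for the penalty term. That Lipschitz constant would be of order $\gk^2$ and would destroy any uniformity in $k$.

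\textbf{Choice of the discrete control.} Fix $k\ge k_4$ and $N$. On each interval $I_j:=[(j-1)h,jh)$ we choose $u_j^N\in U$ by averaging the velocity field. By (H2), $f(x,U)$ is convex, so $\frac1h\int_{I_j} f(x_{\gk}((j-1)h),u(s))\,ds$ belongs to $\clo f(x_{\gk}((j-1)h),U)=f(x_{\gk}((j-1)h),U)$. The equality holds by compactness of $U$ and continuity of $f$. We then pick $u_j^N$ with
\[
f\bigl(x_{\gk}((j-1)h),u_j^N\bigr)=\frac1h\int_{I_j} f\bigl(x_{\gk}((j-1)h),u(s)\bigr)\,ds .
\]
By Proposition~\ref{propo1}, both $x_{\gk}$ and $x^N_{\gk}$ stay in $C^{\gk}(t,k)$ and are $\hat M$-Lipschitz, with $\hat M=M+\frac{2M\bar M_\psi}{\eta}$.

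\textbf{One-step estimate.} Set $\Delta(t):=x_{\gk}(t)-x^N_{\gk}(t)$ and differentiate $\|\Delta\|^2$ on $I_j$. The penalty contributions combine as
\[
-2\bigl\langle \nabla_x e^{\gk\psi_{\gk}(t,x_{\gk})}-\nabla_x e^{\gk\psi_{\gk}(t,x^N_{\gk})},\Delta\bigr\rangle .
\]
Since both points lie in $C^{\gk}(t,k)$, Lemma~\ref{lem2} bounds this term by $\frac{8r\beta MM_\psi}{\eta}\|\Delta\|^2$. In the convex case the bound is $0$, by Remark~\ref{convex}. For the $f$-part we write
\[
f(x_{\gk}(t),u(t))-f(x^N_{\gk}(t),u_j^N)=A+B+C,
\]
where
\begin{align*}
A&:=f(x_{\gk}(t),u(t))-f(x_{\gk}((j-1)h),u(t)),\\
B&:=f(x_{\gk}((j-1)h),u(t))-f(x_{\gk}((j-1)h),u_j^N),\\
C&:=f(x_{\gk}((j-1)h),u_j^N)-f(x^N_{\gk}(t),u_j^N).
\end{align*}
By (H1) and Proposition~\ref{propo1}$(iii)$, $\|A\|\le M\hat M h$. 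Likewise $\|C\|\le M(\|\Delta(t)\|+\hat M h)$, after inserting $x_{\gk}(t)$ and using the Lipschitz bound on $x_{\gk}$. The term $B$ has zero integral over $I_j$ but is not small pointwise. We handle it by writing $\langle B(t),\Delta(t)\rangle=\langle B(t),\Delta((j-1)h)\rangle+\langle B(t),\Delta(t)-\Delta((j-1)h)\rangle$. The first piece integrates to zero over $I_j$. The second is bounded by $2M\cdot 2\hat M (t-(j-1)h)$, since $\Delta$ is $2\hat M$-Lipschitz.

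\textbf{Discrete Gronwall.} Integrating over $I_j$ gives a recursion of the form
\[
\delta_j\le (1+c_1\bar M h)\,\delta_{j-1}+c_2\hat M^2 h^2
\]
with $\bar M=M+\frac{4r\beta MM_\psi}{\eta}$ and $\delta_0=0$, since both trajectories start at $x_0^k$. The within-interval Gronwall factor $e^{c\bar M h}$ is handled through bounds of the type $(1+6\bar M h)(1+\bar M h)$. Iterating and using $(1+c\bar M h)^N\le e^{c\bar M T}$ together with $\sum_{j} (1+c\bar Mh)^{j}\,h\le \frac{e^{c\bar MT}}{c\bar M}$ then gives the stated bound, of order $\frac{\hat M^2e^{6\bar MT}}{\bar M}h$.

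I expect the main obstacle to be the bookkeeping of constants needed to land exactly on $\frac{2\hat M^2e^{6\bar MT}}{3\bar M}(1+6\bar Mh)(1+\bar Mh)h$. I would first absorb the cross terms by Young's inequality ($2ab\le \bar M a^2+b^2/\bar M$) and accept a factor like $6\bar M$ in the exponent.

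The conceptual point requiring care is only the uniformity in $t$ of Lemma~\ref{lem2}. That uniformity is exactly why Lemma~\ref{lem2} was stated for all $t\in[0,T]$, so no extra argument is needed beyond the autonomous proofs.
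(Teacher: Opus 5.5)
Your proposal is correct and is essentially the argument the paper has in mind. The paper defers to the autonomous proofs in \cite{verachadinum1,verachadinum2}: piecewise-constant controls chosen by averaging $f$ (possible by the convexity in (H2)), the penalty term controlled by the hypomonotonicity estimate of Lemma~\ref{lem2} uniformly in $k$ and $t$, and a discrete Gronwall argument to finish. The constant bookkeeping you flag also closes along your lines without any within-interval Gronwall: bounding $\|\Delta(t)\|^2\le 2\delta_{j-1}+8\hat M^2\bigl(t-(j-1)h\bigr)^2$ on $I_j$ and absorbing your cross term by Young's inequality gives $\delta_j\le(1+6\bar Mh)\delta_{j-1}+4\hat M^2h^2\bigl(1+\tfrac{7}{3}\bar Mh\bigr)$, and with $\delta_0=0$ this sums to a bound no larger than the stated one.
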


\subsection{Proof of Theorem \ref{th1}}

The proof of Theorem \ref{th1} now follows exactly along the same lines as the proof of \cite[Theorem 3.1]{verachadinum2}. Indeed, Propositions \ref{propo1}, \ref{propo2}, and \ref{propo3} provide the corresponding nonautonomous counterparts of \cite[Propositions 5.6, 5.7, and 5.8]{verachadinum2}, respectively. Therefore, repeating the same arguments yields the conclusion.

\section{Conclusions}\label{conclusionsection}

In this paper, we extended the numerical approximation method developed in \cite{pinhonum,verachadinum1,verachadinum2} to controlled sweeping processes with moving nonsmooth sweeping sets. The main difficulty comes from the explicit time dependence of the sweeping set, which requires establishing geometric and analytical estimates that are uniform with respect to time. Under suitable assumptions, we proved the convergence of the proposed approximation scheme toward admissible solutions of the original problem.

The numerical simulations presented in Section~\ref{examples} illustrate the effectiveness of the method and show an excellent agreement between the exact and numerical optimal trajectories. Possible extensions of the present work to broader classes of optimal control problems governed by sweeping processes, including free time problems, will be the subject of future research.



\begin{thebibliography}{plain}

\bibitem{outrata} Adam, L.,  Outrata, J.: On optimal control of a sweeping process coupled with an ordinary differential equation. Discrete Contin. Dyn. Syst. B 19, 2709–2738 (2014)

\bibitem{adly} Adly, S.,  Nacry, F.,  Thibault, L.: Preservation of Prox-Regularity of Sets with Applications to Constrained Optimization. SIAM Journal on Optimization. 26:1, 448–473 (2016) 

\bibitem{Bettiol} Bettiol, P., Colombo, G.,  Gidoni, P. : Penalization and Necessary Optimality Conditions for a Class of Nonsmooth Sweeping Processes. Set-Valued Var. Anal. 33, 33 (2025).

\bibitem{brokate} Brokate, M., Krej\v{c}\'{\i}, P.: Optimal control of ODE systems involving a rate independent variational inequality. Discrete and continuous dynamical systems series B 18, 331–348 (2013)

\bibitem{brudnyi} Brudnyi, A., Brudnyi, Y.: Methods of Geometric Analysis in Extension and Trace Problems, Volume 1, Monographs in Mathematics, 102. Birkh\"{a}user/Springer Basel AG, Basel (2012)

\bibitem{ccmn} Cao, T.H., Colombo, G., Mordukhovich, B., Nguyen, D.: Optimization of Fully Controlled Sweeping Processes. J. Differ. Equ. 295, 138–186 (2021)

\bibitem{ccmnbis} Cao, T.H., Colombo, G., Mordukhovich, B., Nguyen, D.: Optimization and discrete approximation of sweeping processes with controlled moving sets and perturbations, J. Differ. Equ. 274, 461–509 (2021)

\bibitem{cmo0} Cao, T.H., Mordukhovich, B.: Optimal control of a perturbed sweeping process via discrete approximations. Discrete Contin. Dyn. Syst. Ser. B 21, 3331–3358 (2016) 

\bibitem{cmo} Cao, T.H., Mordukhovich, B.: Optimality conditions for a controlled sweeping process with applications to the crowd motion model. Disc. Cont. Dyn. Syst. Ser. B 22, 267–306 (2017)

\bibitem{cmo2} Cao, T.H., Mordukhovich, B.: Optimal control of a nonconvex perturbed sweeping process. J. Differ. Equ. 266. 1003–1050 (2019)

\bibitem{cmonew} Cao, T.H., Mordukhovich, B., Nguyen, D., Nguyen, T.,  Thieu, N. N.: Optimal Control of Nonconvex Sweeping Processes with Variable Time via Finite-Difference Approximations. https://arxiv.org/abs/2503.00667v2

\bibitem{verasamara} Chamoun, S., Zeidan, V.: Optimal Control for Coupled Sweeping Processes Under Minimal Assumptions Appl. Math. Optim. 92, 1 (2025).

\bibitem{clarkeold} Clarke, F.H.: Optimization and Nonsmooth Analysis, John Wiley, New York (1983)

\bibitem{prox} Clarke, F.H., Stern, R.J., Wolenski, P.R.: Proximal smoothness and the lower-$C^2$ property. J. Convex
Anal. 2, 117–144 (1995)

\bibitem{clsw} Clarke, F.H.,  Ledyaev, Yu., Stern, R.J., Wolenski, P.R: Nonsmooth Analysis and Control Theory. Graduate Texts in Mathematics, 178, Springer-Verlag, New York (1998)

\bibitem{chhm2} Colombo, G., Henrion, R., Hoang, N.D., Mordukhovich, B.S.: Optimal control of the sweeping process. Dyn. Contin. Discrete Impuls. Syst. Ser. B 19, 117–159 (2012)

\bibitem{chhm} Colombo, G., Henrion, R., Hoang, N.D., Mordukhovich, B.S.: Optimal control of the sweeping process over polyhedral controlled sets. J. Differ. Equ. 260(4), 3407–3447 (2016)

\bibitem{cmn0} Colombo, G., Mordukhovich, B., Nguyen, D.: Optimization of a perturbed sweeping process by constrained discontinuous controls. SIAM J. Control Optim. 58:4, 2678–2709 (2020)

 \bibitem{delfour}  Delfour, M.C.,  Zolésio, J.-P.: Shapes and Geometries, Metrics, Analysis, Differential Calculus, and Optimization. Second edition, Advances in Design and Control, vol. 22, Society for Industrial and Applied Mathematics (SIAM),
Philadelphia PA (2011)

\bibitem{pinho} de Pinho, M.d.R., Ferreira, M.M.A., Smirnov, G.V.: Optimal Control Involving Sweeping Processes. Set-Valued Var. Anal. 27, no. 2, 523–548 (2019)

\bibitem{pinhoEr} de Pinho, M.d.R., Ferreira, M.M.A., Smirnov, G.V.: Correction to: Optimal Control Involving Sweeping Processes. Set-Valued Var. Anal. 27, 1025–1027 (2019)

\bibitem{pinhonum} de Pinho, M.d.R., Ferreira, M.M.A., Smirnov, G.V.: Optimal Control with Sweeping Processes: Numerical Method. J Optim Theory Appl. 185, 845–858 (2020) 

\bibitem{pinholast} de Pinho, M.d.R., Ferreira, M.M.A., Smirnov, G.V.: Necessary conditions for optimal control problems with sweeping systems and end point constraints. Optimization 71:11, 3363–3381 (2021)

\bibitem{pinho22} de Pinho, M.d.R., Ferreira, M.M.A.,  Smirnov, G. A: Maximum Principle for Optimal Control Problems Involving Sweeping Processes with a Nonsmooth Set. J Optim Theory Appl. 199, 273–297 (2023)

\bibitem{henrion} Henrion, R., Jourani, A., Mordukhovich, B.S: Controlled polyhedral sweeping processes: Existence, stability, and optimality conditions, J. Differ. Equ.  366, 408–443 (2023)

\bibitem{palladino} Hermosilla, C., Palladino, M.: Optimal Control of the Sweeping Process with a Nonsmooth Moving Set, SIAM J. Control Optim. 60:5, 2811-2834 (2022)

\bibitem{LiFang} Li, X.-S.,  Fang, S.-C.: On the entropic regularization method for solving min-max problems with applications, Math. Methods Oper. Res. 46, 119–130 (1997)

\bibitem{mordubook} Mordukhovich, B.S.: Variational Analysis and Generalized Differentiation, I: Basic Theory, Springer, Berlin (2006)

\bibitem{moreau1} Moreau, J.J.: Rafle par un convexe variable, I, Trav. Semin. d'Anal. Convexe, Montpellier 1, Expos\'e 15, 36 pp. (1971) 

\bibitem{moreau2} Moreau, J.J.: Rafle par un convexe variable, II, Trav. Semin. d'Anal. Convexe, Montpellier 2, Expos\'e 3, 43 pp. (1972)

\bibitem{moreau3} Moreau, J.J.: Evolution problem associated with a moving convex set in a Hilbert space, J. Differ. Equ. 26, 347-374 (1977)

\bibitem{VCpaper} Nour, C., Zeidan, V.: Optimal control of nonconvex sweeping processes with separable endpoints: Nonsmooth maximum principle for local minimizers. J. Differ. Equ. 318, 113–168 (2022)

\bibitem{verachadinum1} Nour, C., Zeidan, V.: Numerical solution for a controlled nonconvex sweeping process. IEEE Control Syst. Lett. 6, 1190–1195 (2022)

\bibitem{verachadisvaa} Nour, C., Zeidan, V.:  A Control Space Ensuring the Strong Convergence of Continuous Approximation for a Controlled Sweeping Process. Set-Valued Var. Anal. 31, 23 (2023)

\bibitem{verachadijune} Nour, C., Zeidan, V.:  Nonsmooth optimality criterion for a W$^{1,2}$-controlled sweeping process: Nonautonomous perturbation. Appl. Set-Valued Anal. Optim. 5(2), 193–212 (2023)

\bibitem{verachadijca} Nour, C., Zeidan, V.: Pontryagin-Type Maximum Principle for a Controlled Sweeping Process with Nonsmooth and Unbounded Sweeping Set. J. Convex Anal. 31, (2024), to appear.

\bibitem{verachadinum2} Nour, C., Zeidan, V.: Numerical Method for a Controlled Sweeping Process with Nonsmooth Sweeping Set J Optim Theory Appl. 203, 1385--1412, (2024)

\bibitem{rockwet} Rockafellar, R.T., Wets, R.J.-B.: Variational analysis. Grundlehren der Mathematischen Wissenschaften, 317, Springer-Verlag, Berlin (1998)

\bibitem{ThibaultBook}  Thibault, L.: Unilateral Variational Analysis in Banach Spaces. World Scientific (2023)

\bibitem{verachadi} Zeidan, V., Nour, C., Saoud, H.: A nonsmooth maximum principle for a controlled nonconvex sweeping process. J. Differ. Equ. 269(11), 9531–9582 (2021)


\end{thebibliography}
\end{document}